\newtheorem{lemma}{Lemma}[section]
\newtheorem{defn}[lemma]{Definition}
\newtheorem{notation}[lemma]{Notation}
\newtheorem{thm}[lemma]{Theorem}
\newtheorem{example}[lemma]{Example}
\newtheorem{thm?}[lemma]{Theorem?}
\begin{document}
\title[Zeros of polynomials over finite Witt rings]{Zeros of polynomials over finite Witt rings}

\author{Weihua Li}
\author{Wei Cao}

\theoremstyle{definition}
\newtheorem{dfn}[lemma]{Definition}
\renewcommand\atop[2]{\genfrac{}{}{0pt}{}{#1}{#2}}
\newcommand{\Mod}[1]{\ (\mathrm{mod}\ #1)}
\newcommand{\etalchar}[1]{$^{#1}$}
\newcommand{\F}{\mathbb{F}}
\newcommand{\Z}{\mathbb{Z}}
\newcommand{\N}{\mathbb{N}}
\newcommand{\C}{\mathbb{C}}
\newcommand{\Q}{\mathbb{Q}}
\newcommand{\R}{\mathbb{R}}
\newcommand{\et}{\textrm{\'et}}
\newcommand{\ra}{\ensuremath{\rightarrow}}
\newcommand{\lra}{\ensuremath{\longrightarrow}}
\newcommand{\FF}{\F}
\newcommand{\ff}{\mathfrak{f}}

\newcommand{\NN}{\widetilde{\N}}
\newcommand{\mm}{\underline{m}}
\newcommand{\nn}{\underline{n}}
\newcommand{\ch}{}

\renewcommand{\P}{\mathbb{P}}
\newcommand{\PP}{\mathbb{P}}
\newcommand{\pp}{\mathfrak{p}}

\newcommand{\ab}{\operatorname{ab}}
\newcommand{\Aut}{\operatorname{Aut}}
\newcommand{\gk}{\mathfrak{g}_K}
\newcommand{\gq}{\mathfrak{g}_{\Q}}
\newcommand{\OQ}{\overline{\Q}}
\newcommand{\Out}{\operatorname{Out}}
\newcommand{\End}{\operatorname{End}}
\newcommand{\Gal}{\operatorname{Gal}}
\newcommand{\CT}{(\mathcal{C},\mathcal{T})}
\newcommand{\lcm}{\operatorname{lcm}}
\newcommand{\Div}{\operatorname{Div}}
\newcommand{\OO}{\mathcal{O}}
\newcommand{\rank}{\operatorname{rank}}
\newcommand{\tors}{\operatorname{tors}}
\newcommand{\IM}{\operatorname{IM}}
\newcommand{\CM}{\mathbf{CM}}
\newcommand{\HS}{\mathbf{HS}}
\newcommand{\Frac}{\operatorname{Frac}}
\newcommand{\Pic}{\operatorname{Pic}}
\newcommand{\coker}{\operatorname{coker}}
\newcommand{\Cl}{\operatorname{Cl}}
\newcommand{\loc}{\operatorname{loc}}
\newcommand{\GL}{\operatorname{GL}}
\newcommand{\PGL}{\operatorname{PGL}}
\newcommand{\PSL}{\operatorname{PSL}}
\newcommand{\Frob}{\operatorname{Frob}}
\newcommand{\Hom}{\operatorname{Hom}}
\newcommand{\Coker}{\operatorname{\coker}}
\newcommand{\Ker}{\ker}
\newcommand{\g}{\mathfrak{g}}
\newcommand{\sep}{\operatorname{sep}}
\newcommand{\new}{\operatorname{new}}
\newcommand{\Ok}{\mathcal{O}_K}
\newcommand{\ord}{\operatorname{ord}}
\newcommand{\Ohell}{\OO_{\ell^{\infty}}}
\newcommand{\cc}{\mathfrak{c}}
\newcommand{\ann}{\operatorname{ann}}
\renewcommand{\tt}{\mathfrak{t}}
\renewcommand{\cc}{\mathfrak{a}}
\renewcommand{\aa}{\mathfrak{a}}
\newcommand\leg{\genfrac(){.4pt}{}}
\renewcommand{\gg}{\mathfrak{g}}
\renewcommand{\O}{\mathcal{O}}
\newcommand{\Spec}{\operatorname{Spec}}
\newcommand{\rr}{\mathfrak{r}}
\newcommand{\rad}{\operatorname{rad}}
\newcommand{\SL}{\operatorname{SL}}
\newcommand{\fdeg}{\operatorname{fdeg}}
\renewcommand{\rank}{\operatorname{rank}}
\newcommand{\Int}{\operatorname{Int}}
\newcommand{\zz}{\mathbf{z}}

\begin{abstract}
Let $\mathbb{F}_q$ denote the finite field of characteristic $p$ and order $q$. Let $\mathbb{Z}_q$ denote the unramified extension of the $p$-adic rational integers $\mathbb{Z}_p$ with residue field $\mathbb{F}_q$. Given two positive integers $m,n$, define a box $\mathcal B_m$ to be a subset of $\Z_q^n$ with $q^{nm}$ elements such that $\mathcal B_m$ modulo $p^m$ is equal to $(\Z_q/p^m \Z_q)^n$. For a collection of nonconstant polynomials $f_1,\dots,f_s\in \mathbb{Z}_q[x_1,\ldots,x_n]$ and positive integers $m_1,\dots,m_s$, define the set of common zeros inside the box $\mathcal B_m$ to be
$$V=\{X\in \mathcal B_m:\; f_i(X)\equiv 0\mod {p^{m_i}}\mbox{ for all } 1\leq i\leq s\}.$$
It is an interesting problem to give the sharp estimates for the $p$-divisibility of $|V|$. This problem has been partially solved for the three cases: (i) $m=m_1=\cdots=m_s=1$, which is just the Ax-Katz theorem, (ii) $m=m_1=\cdots=m_s>1$, which was solved by Katz, Marshal and Ramage, and (iii) $m=1$, and $ m_1,\dots,m_s\geq 1$, which was recently solved by Cao, Wan and Grynkiewicz. Based on the multi-fold addition and multiplication of the finite Witt rings over $\F_q$, we investigate the remaining unconsidered case of $m>1$ and $m\neq m_j$ for some $1\leq j\leq s$, and finally provide a complete answer to this problem. 
\end{abstract}

\maketitle

\tableofcontents

\section{Introduction} \label{sect1}
\subsection{Notation and problem}

Throughout, let $\N$ denote the set of positive integers and $\N_0=\{0\}\cup\N$, let $p$ be a prime number and $q=p^h$ with $h\in \N$. Let $\mathbb{F}_q$ be the finite field of order $q$.
Let $\Q_p$ be the field of $p$-adic rational numbers and $\Z_p$ the ring of integers in $\Q_p$. Let $\Q_q$ be the unramified extension of $\Q_p$ of degree $h$ and $\Z_q$ the ring of integers in $\Q_q$. The residue field of $\Z_q$ is $\F_q$, i.e., $\Z_q/p\Z_q\cong \F_q$. Let $T_q$ be the set of Teichm\"{u}ller representatives of $\mathbb{F}_{q}$ in $\mathbb{Z}_q$. Then each element $a\in \Z_q$ can be uniquely expressed as $a=\sum_{i=0}^{\infty}a_ip^i$ with $a_i\in T_q$.

Given $m,n\in \N$, we define a box $\mathcal B_m$ to be a subset of $\Z_q^n$ with $q^{nm}$ elements such that $\mathcal B_m$ modulo $p^m$ is equal to $(\Z_q/p^m \Z_q)^n$. The box $\mathcal B_m$ is called a split box if $\mathcal B_m = \mathcal I_1 \times \cdots \times \mathcal I_n$, where each $\mathcal I_i$ is an $m$-dimensional box in $\Z_q$ lifting $\mathbb{F}_q$. If each element $a\in \mathcal B_m$ has the expression as $a=\sum_{i=0}^{m-1}a_ip^i$ with $a_i\in T_q^n$, we call $\mathcal B_m$ the Teichm\"{u}ller box, and simply denote by $\mathcal B_m=\mathcal T_m$.

Let $\mathbb{Z}_q[x_1,\ldots,x_n]$ denote the ring of the polynomials in $n$ variables $x_1,\ldots,x_n$ with coefficients in $\mathbb{Z}_q$. Let $f_1,\dots,f_s\in \mathbb{Z}_q[x_1,\ldots,x_n]$ be a system of nonconstant polynomials. Write $X:=(x_1,\ldots,x_n)$ and $[a,b]:=\{ x\in \Z \mid a\leq x\leq b\}$ for $a,b\in \R$. As usual, let $|S|$ denote the cardinality of a set $S$. Given $s$ positive integers $m_1,\dots,m_s$, define
\begin{align*}V=\{X\in \mathcal B_m:\; f_k(X)\equiv 0\mod {p^{m_k}}\mbox{ for all } k\in [1,s]\}.\end{align*}

We are interested in the $p$-adic estimate of $|V|$. Let $\mathrm{ord}_q $ denote the $q$-adic additive
valuation normalized by $\mathrm{ord}_qq=1$. If $q=p$, then $\mathrm{ord}_p $ is the $p$-adic additive
valuation normalized by $\mathrm{ord}_pp=1$. We want to know the $p$-adic estimate of $|V|$, in other words, what is the sharp lower bound for $\mathrm{ord}_q(|V|)$? The satisfactory $p$-adic estimates of $|V|$ have been obtained separately for the following three cases:
\begin{itemize}
  \item [(a)] $m=m_1=\cdots=m_s=1$,
  \item [(b)] $m=m_1=\cdots=m_s>1$,
  \item [(c)] $m=1, m_1,\dots,m_s\geq 1$.
\end{itemize}
Here the adjective ``satisfactory" means that the lower bound can be achieved. It will be seen that the expressions of the $p$-adic estimates of $|V|$ in the three cases above are quite different, especially in the case (b). The reason lies in two aspects: (\romannumeral1) when $m>1$, the residue ring $\Z_q/p^m\Z_q$ will no longer be a field, and (\romannumeral2) the introduction of the box makes the problem much more complicated.

The remaining case of $m>1$ and $m\neq m_j$ for some $j\in[1,s]$ of this problem has not been considered. This paper will be addressed to this problem, including the remaining unconsidered case. We will also give the satisfactory $p$-adic estimate for this case in general and finally provide a complete answer to this problem.

\subsection{Partial known results}
According to the historical development on this topic, the result for a single polynomial, the simplest case, has first been found, which has then been extended to a system of polynomials.

If $m=m_1=\cdots=m_s=1$, then $\Z_q/p\Z_q\cong\F_q$ and hence the problem reduces to the classical one of counting points on an algebraic variety over the finite field $\F_q$. For $t\in\mathbb{R}$, let $\lceil t \rceil^*$ denote the least nonnegative integer more than or equal to $t$, and let $\lfloor t \rfloor$ denote the greatest integer less then or equal to $t$. The following theorem, restated in our setting, was first found by Ax \cite{Ax} for $s=1$ and then by N.M. Katz \cite{Katz} for general $s\geq 1$.
\begin{thm}[Ax-Katz]\label{axkatzthm}
Let $p$ be a prime number and $q = p^h$ with $h\in\mathbb{N}$. Let $\mathcal B_1$ be a subset of $\Z_{q}^n$ with $q^n$ elements such that $\mathcal B_1$ modulo $p$ is equal to $\F_{q}^n$. Let $f_1,\dots,f_s\in \mathbb{Z}_q[x_1,\ldots,x_n]$ be a system of nonconstant polynomials. Let $V=\{X\in \mathcal B_1:f_1(X)\equiv \cdots\equiv f_s(X)\equiv 0\mod {p}\}$. 
Then
\begin{equation*}
  \mathrm{ord}_q(|V|)\geq \left\lceil\frac{n-\sum_{k=1}^s\deg f_k}{\max\nolimits_{k \in [1,s]}\deg f_k}\right\rceil^*.
\end{equation*}
\end{thm}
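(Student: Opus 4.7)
First, I observe that $|\mathcal B_1|=q^n=|\F_q^n|$ together with the condition that $\mathcal B_1 \bmod p = \F_q^n$ forces the reduction map $\mathcal B_1 \to \F_q^n$ to be a bijection. Since each condition $f_k(X)\equiv 0\pmod p$ depends only on $X\bmod p$, we have $|V|=|\overline V|$ where
\[
\overline V=\{x\in \F_q^n : \overline f_k(x)=0 \text{ for all }k\in[1,s]\},
\]
and $\overline f_k \in \F_q[x_1,\dots,x_n]$ is the reduction of $f_k$ modulo $p$. So it suffices to prove the classical Ax--Katz bound for the $\F_q$-point count of the algebraic set $\overline V$.

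For this, my plan is the standard exponential-sum method. Fix a nontrivial additive character $\psi\colon \F_q\to \C^\times$. Expanding $\mathbf{1}_{y=0}=q^{-1}\sum_{t\in \F_q}\psi(ty)$ for each defining condition and swapping sums yields
\[
|\overline V|=\frac{1}{q^s}\sum_{\vec t\in \F_q^s}S(\vec t),\qquad S(\vec t):=\sum_{x\in \F_q^n}\psi\!\left(\sum_{k=1}^s t_k \overline f_k(x)\right).
\]
The $\vec t=0$ contribution is $q^{n-s}$, whose $q$-adic valuation trivially meets the desired bound. For nonzero $\vec t$, the task is to show that $\ord_q S(\vec t)\geq s+\lceil (n-\sum_k \deg f_k)/\max_k \deg f_k\rceil^*$, so that after dividing by $q^s$ the required valuation on $|\overline V|$ is recovered.

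The core step, and the main obstacle, is this $p$-adic valuation estimate for $S(\vec t)$. Following Ax, I would lift $\psi$ via the Teichm\"uller character and a Dwork-type splitting function, and then expand $S(\vec t)$ as a $\Z_q[\zeta_p]$-linear combination of pure monomial sums over $\F_q^n$, whose valuations are controlled by Stickelberger's theorem through the base-$p$ digit sums of their exponent vectors. Ax's original argument delivers the bound when $s=1$; Katz's refinement uses the simultaneous freedom of the $s$ parameters $t_1,\dots,t_s$ to reduce to a combinatorial optimization that recovers the $\max_k \deg f_k$ in the denominator. This combinatorial balancing of monomial exponents against base-$p$ digit sums across multiple polynomials is the delicate heart of the argument; once it is carried out, assembling the per-$\vec t$ estimates produces the claimed bound on $\ord_q(|V|)$.
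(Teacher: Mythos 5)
The paper does not prove Theorem~\ref{axkatzthm}: it is stated as a known result and attributed to Ax and N.~M.~Katz, so there is no ``paper's own proof'' to line your argument up against. With that understood, two remarks.

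Your opening reduction is correct and is exactly what the phrase ``restated in our setting'' is implicitly appealing to: since $|\mathcal B_1|=q^n$ and $\mathcal B_1\bmod p=\F_q^n$, the reduction map is a bijection, the conditions $f_k(X)\equiv 0\pmod p$ factor through $X\bmod p$, and therefore $|V|$ equals the number of $\F_q$-points of the reduced system. This is the right, and essentially the only, step needed to pass from the box formulation to the classical Ax--Katz theorem over $\F_q$.

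Beyond that point, however, what you have written is a description of a proof strategy, not a proof. The passage ``Following Ax, I would lift $\psi$ \dots Ax's original argument delivers the bound when $s=1$; Katz's refinement uses the simultaneous freedom \dots once it is carried out, assembling the per-$\vec t$ estimates produces the claimed bound'' names the hard step and then defers it. The entire content of the Ax--Katz theorem lives precisely in the $p$-adic valuation estimate for $S(\vec t)$ via Stickelberger's theorem and the attendant combinatorial optimization over base-$p$ digit sums; you acknowledge this as ``the delicate heart'' and leave it unexecuted. A further small point: the method you sketch (additive characters, Teichm\"uller/Dwork splitting, Stickelberger) is in the style of Ax and of Adolphson--Sperber/Wan, whereas Katz's 1971 proof of the general-$s$ case proceeds differently; the attribution ``Katz's refinement uses the simultaneous freedom of the $s$ parameters'' does not accurately describe his argument. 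None of this undermines the correctness of the plan, but as submitted it establishes only the reduction to the classical theorem and then cites, rather than proves, that theorem --- which, to be fair, is all the paper itself does at this point.
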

The Ax-Katz theorem greatly improves the original Chevalley-Warning theorem, which asserts that $|V|$ is divisible by $p$ provided that $n>\sum_{k=1}^s\deg f_k$. There have been a number of work devoting to this case (see, for example, \cite{AS,AM,clark2,cao1,cao3,cao4,caosun,caowan,castro,chencao,Clark1,clark3,Moreno,moreno2,Wilson}).

The case of $m=m_1=\cdots=m_s>1$ was first tackled and solved by Marshal and Ramage \cite{MR} for one polynomial, and then was extended to the system of polynomials by D.J. Katz \cite{DJKatz} in their study on zeros of polynomials over finite principal ideal rings. Note that the residue ring $\Z_q/p^m\Z_q$ is a special kind of finite principal ideal rings, which is isomorphic to the Galois ring (cf. Subsection \ref{boxsub}). Below are their results restated in our setting.
\begin{thm}[Katz-Marshal-Ramage]\label{mrkthm}
Let $p$ be a prime number and $q = p^h$ with $h\in\mathbb{N}$. Let $m\geq 2$ be an integer and $\mathcal B_m$ be a subset of $\Z_{q}^n$ with $q^{mn}$ elements such that $\mathcal B_m$ modulo $p^m$ is equal to $(\Z_{q}/p^m\Z_{q})^n$. Let $f_1,\dots,f_s\in \mathbb{Z}_q[x_1,\ldots,x_n]$ be a system of nonconstant polynomials. Let $V=\{X\in \mathcal B_m: f_1(X)\equiv \cdots\equiv f_k(X)\equiv 0\mod {p^{m}}\}$. 
Then
\begin{equation*}
  \mathrm{ord}_q(|V|)\geq\left\{
                           \begin{array}{ll}
                           \left\lfloor\frac{(n-s+1)m-1}{2}\right\rfloor & \hbox{  if  }n>s \hbox{ and  any }\deg f_k>1,\\
                           \left\lceil (n-s)m\right\rceil^* & \hbox{otherwise.}
                           \end{array}
                         \right.
\end{equation*}
\end{thm}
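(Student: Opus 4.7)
The plan is to extend the character-sum strategy of Ax-Katz from $\F_q$ to the Galois ring $R_m := \Z_q/p^m\Z_q$. Since the defining congruences for $V$ depend only on $X\bmod p^m$, I may replace $\mathcal B_m$ with $R_m^n$ without changing $|V|$ (the fibers of the reduction $\mathcal B_m\to R_m^n$ are all singletons). Each $x\in R_m$ has a unique Teichm\"uller expansion $x=\sum_{i=0}^{m-1} x_i p^i$ with $x_i\in T_q$, giving a bijection $R_m^n \leftrightarrow (\F_q^n)^m$ via the ``digit'' parameterization.

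Next, fix a nontrivial additive character $\psi$ of $R_m$, so that every additive character is of the form $x\mapsto\psi(ax)$ for a unique $a\in R_m$. By orthogonality,
\begin{equation*}
|V| \;=\; \frac{1}{q^{ms}}\sum_{a_1,\dots,a_s\in R_m}\ \sum_{X\in R_m^n}\psi\!\left(a_1 f_1(X)+\cdots+a_s f_s(X)\right).
\end{equation*}
I would stratify the outer sum by $(j_1,\dots,j_s)$, where $j_k=\mathrm{ord}_p(a_k)\in[0,m]$. The tuple $(m,\dots,m)$ contributes the ``main term'' $q^{m(n-s)}$, and the remaining strata must be shown to carry $q$-adic valuation at least the target bound. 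For a single stratum, the inner character sum factors through the Teichm\"uller digits of $X$: writing $a_k=p^{j_k}u_k$, the product $a_k f_k(X)\bmod p^m$ depends polynomially on only the top $m-j_k$ digits of $X$. Applying the Ax bound $\mathrm{ord}_q\!\left(\sum_{Y\in\F_q^n}\psi(g(Y))\right)\geq \lceil n/\deg g\rceil^*$ to each digit-layer then yields an Ax-type valuation estimate for the stratum.

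Summing these stratum estimates and minimizing over $(j_1,\dots,j_s)\in[0,m]^s$ reduces the problem to a linear-programming question whose optimum should reproduce the two-case bound. Heuristically, when some $f_k$ is linear, a change of variables collapses the optimum to $\lceil (n-s)m\rceil^*$. When every $\deg f_k\ge 2$, however, the digit-layer polynomials behave more like ``degree-$d$ in $m-j_k$ fresh variables,'' and the optimal stratum balances the $j_k$'s near $m/2$, producing the extra factor of roughly $1/2$ and the floor in $\lfloor\frac{(n-s+1)m-1}{2}\rfloor$.

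The main obstacle, as I anticipate it, is twofold. First, the digit expansion of $a_k f_k(X)$ is not a simple polynomial identity: Teichm\"uller arithmetic is nonlinear (e.g., $(x+y)_0=x_0+y_0$ but higher digits involve $p$-power Frobenius corrections), so extracting the precise degree of each digit-layer polynomial requires care with Witt-vector-style addition formulas. Second, the linear-programming step must be matched by sharp examples --- most naturally diagonal systems $f_k(X)=\sum_i c_{k,i}x_i^{d}$ evaluated on a Teichm\"uller box $\mathcal T_m$ --- to confirm that the bound cannot be tightened further. Verifying sharpness in the floor regime is historically the harder half and is where I expect the real work to lie.
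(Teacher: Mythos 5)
First, note that the paper does not prove Theorem~\ref{mrkthm}: it is cited from Marshall--Ramage \cite{MR} (for $s=1$) and D.~J.~Katz \cite{DJKatz} (for general $s$), and in Section~\ref{sect3} the paper simply invokes it as a black box to obtain \eqref{systempoly2}. So there is no in-paper proof to compare against; I am judging your sketch against what the cited works actually do.

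Your opening moves are fine, and roughly match the character-sum framework used in \cite{DJKatz}: replace $\mathcal B_m$ by $R_m^n$ (the fibers of reduction are singletons), expand $|V|$ by additive-character orthogonality over $R_m=\Z_q/p^m\Z_q$, and stratify the $a$-sum by $(\ord_p a_1,\dots,\ord_p a_s)$, with $(m,\dots,m)$ giving the main term $q^{m(n-s)}$. But the step ``apply the Ax bound to each digit-layer'' cannot close, for a structural reason: the target bound $\lfloor\frac{(n-s+1)m-1}{2}\rfloor$ is \emph{independent of $\deg f_k$} (it only needs each $\deg f_k\ge 2$), whereas any reduction of a stratum to an Ax--Katz estimate over $\F_q$ on Teichm\"uller digit-layer polynomials yields a bound that \emph{degrades} as the degrees grow. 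The digit-layer polynomials have degrees growing like $p^i\deg f_k$ (this is exactly what Lemmas~\ref{homcor} and~\ref{smm} in this paper quantify), so for $n$ fixed and $\deg f_k\to\infty$ the Ax-type input becomes trivial, yet the Katz--Marshall--Ramage bound stays at $\lfloor\frac{(n-s+1)m-1}{2}\rfloor$. Carrying out your plan would at best reprove the degree-sensitive general estimate \eqref{gg07}, not Theorem~\ref{mrkthm}; the two bounds are genuinely different, and for $\deg f_k$ large \eqref{gg07} is far weaker.

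The missing ingredient is a stationary-phase / Hensel-descent estimate peculiar to $\Z_q/p^m\Z_q$. Inside a single stratum one writes $x=y+p^{m-1}z$, observes that $\psi(p^{m-1}z\,\nabla\!\cdot)$ forces the sum over $z$ to vanish off the critical locus $\{\nabla(\sum_k u_k f_k)\equiv 0\ (\mathrm{mod}\ p)\}$, and iterates; the resulting gain of roughly a factor $q^{1/2}$ per digit level is where the $\lfloor\tfrac{\cdot}{2}\rfloor$ really comes from, and it is entirely degree-independent once $\deg f_k\ge 2$. Your heuristic of ``balancing the $j_k$ near $m/2$'' is groping toward the right phenomenon but attributes it to the wrong place: the square-root gain arises from descent \emph{within} each stratum, not from a linear program over the strata. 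Minor correction as well: with $\ord_p(a_k)=j_k$, the value $a_kf_k(X)\bmod p^m$ depends only on $X\bmod p^{m-j_k}$, i.e.\ the \emph{bottom} $m-j_k$ Teichm\"uller digits, not the top ones; and ``depends polynomially'' is precisely the nontrivial Witt-vector bookkeeping (Lemmas~\ref{homcor}--\ref{smm}) rather than a given.
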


For the case of $m\neq m_j$ for some $j\in[1,s]$, it turns out that in general the answer depends heavily upon the box $\mathcal B_m$ chosen. Roughly speaking, the effective $p$-estimate of $|V|$ can be obtained only if the box $\mathcal B_m$ has a low algebraic relation with the Teichm\"{u}ller box $\mathcal T_m$ in some sense. Here we simply use the notation $\mathcal B_m \overset{m_s}{\sim} \mathcal T_m$ to denote this relation (cf. Section \ref{sect3} for the detail explanation).

\begin{thm}[Cao-Wan-Grynkiewicz]\label{caowanthm}
Let $p$ be a prime number and $q = p^h$ with $h\in\mathbb{N}$. Let $m_1,\dots,m_s\in \N$ with $m_s\geq \dots \geq m_1$. Let $\mathcal B_1$ be a subset of $\Z_{q}^n$ with $q^n$ elements such that $\mathcal B_1$ modulo $p$ is equal to $\F_{q}^n$. Let $f_1,\dots,f_s\in \mathbb{Z}_q[x_1,\ldots,x_n]$ be a system of nonconstant polynomials. Let $V=\{X\in \mathcal B_1: f_k(X)\equiv 0\mod {p^{m_k}}\mbox{ for all } k\in [1,s]\}$. 
If $\mathcal B_1 \overset{m_s}{\sim} \mathcal T_1$, then
\begin{equation}\label{intreq1}
\mathrm{ord}_q(|V|)\geq \left\lceil\frac{n-\sum_{k=1}^s\frac{p^{m_k}-1}{p-1}\deg f_k}{\max\nolimits_{k \in [1,s]}\{p^{m_k-1}\deg f_k\}}\right\rceil^*.
\end{equation}
In particular, if $q=p$, then \eqref{intreq1} holds for all split boxes $\mathcal B_1$.
\end{thm}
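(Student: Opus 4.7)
The plan is to translate the mixed congruence conditions $f_k(X)\equiv 0 \mod p^{m_k}$ into a single system of polynomial equations over the finite field $\F_q$, and then invoke the Ax-Katz theorem (Theorem \ref{axkatzthm}). The translation is driven by the fact that $\Z_q/p^{m_s}\Z_q$ carries the structure of a finite Witt ring, which allows each higher-depth congruence to be decomposed into $m_k$ equations modulo $p$ in the Teichm\"uller digits of $X$.

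First I would use the hypothesis $\mathcal B_1 \overset{m_s}{\sim} \mathcal T_1$ to replace $\mathcal B_1$ by the Teichm\"uller box $\mathcal T_1 = T_q^n$ in the definition of $V$; the relation $\overset{m_s}{\sim}$ is designed precisely so that the two counts are interchangeable up to depth $p^{m_s}$, and for $q=p$ every split box satisfies this equivalence automatically, which yields the final assertion of the theorem. Once restricted to $X\in T_q^n$, each value $f_k(X)\in\Z_q$ admits a Teichm\"uller expansion
$$f_k(X)=\sum_{j\geq 0}c_k^{(j)}(X)\,p^j, \qquad c_k^{(j)}(X)\in T_q,$$
and the congruence $f_k(X)\equiv 0 \mod p^{m_k}$ is equivalent to $c_k^{(0)}(X)=\cdots=c_k^{(m_k-1)}(X)=0$, viewed as equalities in $\F_q$ via the bijection $T_q\to\F_q$, $t\mapsto t\mod p$.

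The technical heart is the degree bound $\deg c_k^{(j)}\leq p^j\deg f_k$, with $c_k^{(j)}$ regarded as an element of $\F_q[x_1,\dots,x_n]$. Writing $f_k(X)=\sum_\alpha a_\alpha X^\alpha$ and expanding each $a_\alpha\in\Z_q$ in its own Teichm\"uller series $\sum_l a_{\alpha,l}p^l$, the contribution $a_{\alpha,l}X^\alpha p^l$ contributes a single Teichm\"uller digit of $X$-degree at most $|\alpha|\leq \deg f_k$. Summing all such contributions is governed by the Witt addition polynomials $S_j(x_0,\dots,x_j;y_0,\dots,y_j)$, which are isobaric of weight $p^j$ when $x_i,y_i$ are assigned weight $p^i$. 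A short monomial count shows that if the inputs at weight level $i$ are polynomials of degree at most $p^i D$, then $S_j$ evaluates to a polynomial of degree at most $p^j D$; consequently, the bound $\deg c_k^{(j)}\leq p^j\deg f_k$ propagates through the multi-fold Witt sum over all monomials $\alpha$.

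Finally, $V$ is identified with the $\F_q$-common zero set of the system $\{c_k^{(j)}=0 : 1\leq k\leq s,\ 0\leq j\leq m_k-1\}$, a family of $\sum_{k=1}^s m_k$ polynomials in $n$ variables. Applying Theorem \ref{axkatzthm} and using the identities $\sum_{j=0}^{m_k-1}p^j=(p^{m_k}-1)/(p-1)$ and $\max_{0\leq j\leq m_k-1}p^j\deg f_k=p^{m_k-1}\deg f_k$ delivers exactly the estimate \eqref{intreq1}. The principal obstacle is the stability of the degree estimate under multi-fold Witt addition: the weighted homogeneity of a single $S_j$ is classical, but to show that iterating Witt additions across all the coefficient expansions preserves the sharp factor $p^j$ requires the careful weighted-monomial bookkeeping indicated above — this is precisely the multi-fold Witt arithmetic emphasized in the abstract, and it is the technical input without which the Ax-Katz reduction would collapse to a much weaker bound.
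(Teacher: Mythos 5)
Your outline correctly identifies the overall strategy — decompose each congruence $f_k\equiv 0 \bmod p^{m_k}$ into $m_k$ polynomial equations over $\F_q$ via Teichm\"uller digits, bound their degrees by $p^j\deg f_k$, and invoke Ax–Katz — but it contains a genuine misstep at the first stage that would make the resulting argument fail for a general box.

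You assert that $\mathcal B_1 \overset{m_s}{\sim}\mathcal T_1$ licenses replacing $\mathcal B_1$ by the Teichm\"uller box $\mathcal T_1$ in the definition of $V$, and that ``the two counts are interchangeable up to depth $p^{m_s}$.'' This is not what the relation means, and it is false in general: the count $|V|$ genuinely depends on the box (the examples in Section~\ref{sect4} exhibit two boxes $\mathcal B_2\overset{3}{\sim}\mathcal T_2$ yielding $|V|=30$ and $|V|=32$ for linear forms). The relation $\mathcal B_1 \overset{m_s}{\sim}\mathcal T_1$ is a \emph{degree} condition: after parametrizing each $Y\in\mathcal B_1$ as $Y=X_1+\sum_{i\ge1}(g_{i1}(X_1),\dots,g_{in}(X_1))p^i$ with $X_1\in\mathcal T_1$ (Lemma~\ref{boxlem}), it says that $\deg g_{ij}\le p^{h\lfloor i/h\rfloor}$ for $i\in[0,m_s-1]$. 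The correct argument does \emph{not} discard the $g_{ij}$; instead one substitutes this parametrization into $f_k$, reduces modulo $p^{m_k}$ using the Witt-ring structure (Lemma~\ref{keylem}), and then the degree bound on the $g_{ij}$ combined with Lemma~\ref{jjkk} and Lemma~\ref{homcor} is exactly what makes the $j$-th digit polynomial have degree $\le p^j\deg f_k$. Your Witt-addition bookkeeping handles only the coefficient digits of the $a_\alpha$ and would be sound when $\mathcal B_1=\mathcal T_1$, i.e.\ $g_{ij}=0$ for $i\ge1$; for a general box it omits the contribution of the $g_{\beta_{tl}l}$ factors entirely, which is precisely where the $\overset{m_s}{\sim}$ hypothesis enters.

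The final observation about $q=p$ is correct but under-justified: for a split box, $\deg g_{ij}\le q-1=p-1\le p^i$ for all $i\ge 1$, hence $\mathcal B_1\overset{m'}{\sim}\mathcal T_1$ for every $m'$; it is not an automatic consequence of ``interchangeability'' of counts (which does not hold).
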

Cao and Wan \cite{caowan} proved Theorem \ref{caowanthm}, which generalizes and improves the result by Grynkiewicz \cite{Grynkiewicz} for the case that $q=p$ and $\mathcal B_1$ is split. In fact, the stronger result was obtained by Cao and Wan \cite[Theorem 4.6]{caowan} in which the Teichm\"uller expansions of polynomials over $\Z_q$ are taken into consideration. 

\subsection{Complete answer to the problem}

The theorem below, which is the same as Theorem \ref{thmforsystem}, provides a complete answer to the problem as mentioned above. 

\begin{thm}
Let $p$ be a prime number and $q = p^h$ with $h\in\mathbb{N}$. Let $m,m_1,\dots,m_s\in \N$ with $m_s\geq \dots \geq m_1$. Let $\mathcal B_m$ be a subset of $\Z_{q}^n$ with $q^{mn}$ elements such that $\mathcal B_m$ modulo $p^m$ is equal to $(\Z_{q}/p^m\Z_{q})^n$. Let $f_1,\dots,f_s\in \mathbb{Z}_q[x_1,\ldots,x_n]$ be a system of nonconstant polynomials. Let $V=\{X\in \mathcal B_m: f_k(X)\equiv 0\mod {p^{m_k}}\mbox{ for all } k\in [1,s]\}$. Assume $\mathcal B_m \overset{m_s}{\sim} \mathcal T_m$. In general, we have
\begin{equation*}
  \mathrm{ord}_q(|V|)\geq\left\lceil\frac{nm-\sum_{k=1}^s\frac{p^{m_k}-1}{p-1}\deg f_k}{\max\nolimits_{k \in [1,s]}\{p^{m_k-1}\deg f_k\}}\right\rceil^*.
\end{equation*}
In particualr, further assume $m\geq m_s=\dots=m_1$, then
\begin{numcases}{\mathrm{ord}_q(|V|)\geq}
 \left\lceil\frac{n-\sum_{k=1}^s\deg f_k}{\max\nolimits_{k \in [1,s]}\deg f_k}\right\rceil^*+n(m-m_1) & if   $m_1=1$,\nonumber \\
  \left\lfloor\frac{(n-s+1)m_1-1}{2}\right\rfloor+n(m-m_1) &  if $m_1>1, n> s, \deg f_1>1,\dots, \deg f_s> 1$,\nonumber \\
     \left\lceil(n-s)m_1\right\rceil^*+n(m-m_1) & \mbox{otherwise.} \nonumber
\end{numcases}
\end{thm}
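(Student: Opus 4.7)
The plan is to reduce both statements to the three prior theorems (\ref{axkatzthm}, \ref{mrkthm}, \ref{caowanthm}) by means of two techniques: Teichm\"uller expansion, which re-expresses an $n$-variable problem in a box $\mathcal{B}_m$ as an $nm$-variable problem in a box of height $1$; and fibering over the reduction modulo $p^{m_1}$, which turns the height-$m$ count into $q^{n(m-m_1)}$ copies of a height-$m_1$ count.

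For the general bound, I would begin with the Teichm\"uller box $\mathcal{B}_m=\mathcal{T}_m$. Every $X\in\mathcal{T}_m$ has a unique expansion $X=\sum_{i=0}^{m-1}p^iX_i$ with $X_i\in T_q^n$, giving a bijection $\mathcal{T}_m\to T_q^{nm}$. Reading each $f_k(X)$ as a polynomial in the $nm$ Teichm\"uller coordinates preserves its total degree, and the congruences $f_k(X)\equiv 0\pmod{p^{m_k}}$ are unchanged. Thus $V$ is the zero set of the same system inside a Teichm\"uller box in $nm$ variables, and Theorem \ref{caowanthm} applied with $n$ replaced by $nm$ yields exactly the desired bound. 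For a general box $\mathcal{B}_m$ with $\mathcal{B}_m\overset{m_s}{\sim}\mathcal{T}_m$, I would carry the same reparametrization through, using the hypothesis $\overset{m_s}{\sim}$ to guarantee that the resulting subset of $\Z_q^{nm}$ is a box of the type required by Theorem \ref{caowanthm}, namely a lift of $\F_q^{nm}$ satisfying the analogous height-$1$ relation to its Teichm\"uller counterpart. This is where the Witt-ring addition/multiplication formulas developed earlier in the paper will be used.

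For the special case $m\geq m_s=\cdots=m_1$, every condition $f_k(X)\equiv 0\pmod{p^{m_k}}$ depends only on $X\bmod p^{m_1}$. Since $\mathcal{B}_m$ is in bijection with $(\Z_q/p^m\Z_q)^n$, the reduction modulo $p^{m_1}$ is a $q^{n(m-m_1)}$-to-one surjection onto $(\Z_q/p^{m_1}\Z_q)^n$. Writing $V'$ for the corresponding zero set in any box $\mathcal{B}_{m_1}$ lifting $(\Z_q/p^{m_1}\Z_q)^n$, we obtain $|V|=q^{n(m-m_1)}|V'|$, which contributes the $n(m-m_1)$ summand. The remaining quantity $\mathrm{ord}_q(|V'|)$ is then bounded by Theorem \ref{axkatzthm} when $m_1=1$ and by Theorem \ref{mrkthm} when $m_1>1$, giving the three sub-cases exactly as displayed.

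The main obstacle I anticipate is the general-box part of the first statement: verifying that the Teichm\"uller reparametrization of $\mathcal{B}_m$ is compatible with the relation $\mathcal{B}_m\overset{m_s}{\sim}\mathcal{T}_m$, a condition on the Witt-vector structure of $\mathcal{B}_m$ rather than merely a set-theoretic lift. The proof of Theorem \ref{caowanthm} itself rests on carefully tracking Teichm\"uller expansions of polynomials over $\Z_q$, and lifting that machinery uniformly to the $m>1$ setting should form the technical core of the argument.
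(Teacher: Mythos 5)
Your treatment of the special case ($m \geq m_s = \cdots = m_1$) is correct and coincides with the paper's: fiber over reduction mod $p^{m_1}$ to get $|V| = q^{n(m-m_1)}|V'|$, then apply Theorem~\ref{axkatzthm} or Theorem~\ref{mrkthm} to $V'$. Your treatment of the general bound in the case $\mathcal{B}_m=\mathcal{T}_m$ is also sound: the substitution $x_j=\sum_{i<m}\tau(x_{ij})p^i$ is linear in the $nm$ Witt coordinates, so the reparametrized polynomials $F_k$ keep degree $\deg f_k$, the new box is $\mathcal{T}_1$ in $\Z_q^{nm}$, and Theorem~\ref{caowanthm} applies with $n$ replaced by $nm$ to give exactly the target bound.

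The gap is precisely at the point you flag as "the main obstacle," and it is not merely a technical loose end. For a general $\mathcal{B}_m$ with $\mathcal{B}_m\overset{m_s}{\sim}\mathcal{T}_m$, Lemma~\ref{boxlem} gives $Y = X_m + \sum_{i\geq m}(g_{i1}(X_m),\dots,g_{in}(X_m))p^i$, and the substituted polynomial $F_k(X_m)=f_k(Y(X_m))$ reduced mod $p^{m_k}$ has total degree up to $\deg f_k\cdot\max_{m\leq i<m_k}\deg g_{ij}$, which under the $\overset{m_s}{\sim}$ hypothesis is $\deg f_k\cdot p^{h\lfloor (m_k-1)/h\rfloor}$; this strictly exceeds $\deg f_k$ once $m_k>h$. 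If you then invoke Theorem~\ref{caowanthm} as a black box with these inflated degrees, the resulting estimate is strictly weaker than the claimed one. There is also no natural way to instead push the $g_{ij}$'s into the box: the reparametrization carries $\mathcal{B}_m$ onto $T_q^{nm}=\mathcal{T}_1$, so the higher Witt data of $\mathcal{B}_m$ necessarily shows up in the polynomials, not in the box.

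The paper avoids this by not routing through the statement of Theorem~\ref{caowanthm} at all. It expands $f_k(Y)\bmod p^{m_k}$ directly via the Witt-ring operations into $m_k$ polynomial equations $\widetilde{h}_{0k},\dots,\widetilde{h}_{m_k-1,k}$ over $\F_q$ in $nm$ variables, and then applies the plain Ax-Katz theorem. The crucial step — invisible if you treat Theorem~\ref{caowanthm} as a black box — is the Frobenius reduction in $\F_q$, namely $\alpha^{p^c}=\alpha^{p^{c-h\lfloor c/h\rfloor}}$, which when combined with Lemma~\ref{jjkk} yields the inequality \eqref{sunak} and hence $\deg\widetilde{h}_{tk}\leq p^t\deg f_k$ with no inflation from the $g_{ij}$'s. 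Without unwinding to this $\F_q$-level bound, your reduction does not deliver the stated estimate for a general box; with it, you are essentially reproving the special case of Theorem~\ref{caowanthm} that you need, which is what the paper does via Theorem~\ref{thmforsing} and the key lemmas of Section~\ref{sect2}.
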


This paper is organized as follows. In Section \ref{sect2}, we review some necessary facts and properties about the Witt vectors, finite Witt rings, Galois rings and their relationships with the ring $\Z_q$, with focus on the multi-fold addition and multiplication in these rings as well as the estimates for the degrees of associated polynomials for summation and production. The main results are given in Section \ref{sect3}, in which we first investigate the single polynomial case (cf. Theorem \ref{thmforsing}), then we extend it to the polynomial system (cf. Theorem \ref{thmforsystem}). Note that both Theorems \ref{thmforsing} and \ref{thmforsystem} for the case of $m>1$ and $m\neq m_j(j\in[1,s])$ can be improved by using the tighter upper bounds for the degrees of the terms in the Teichm\"uller expansions of polynomials. Some examples and future research are provided in Section \ref{sect4}. 

\section{Preliminaries on Witt rings}\label{sect2}

The tools to study our problem are Witt vectors and Teichmüller lifting, which also play important roles in localization theory and many other modern topics in mathematics. In this section, we only review the construction and properties of the classical $p$-typical Witt vectors. For more other generalizations of Witt vectors, refer to \cite{rabinoff,serre}.

\subsection{Construction of Witt rings}
Let $(X_0,X_1,\dots)$ and $(Y_0,Y_1,\dots)$ be two sequences of indeterminates. For each $k\in\mathbb{N}_0$, the $k$-th Witt polynomial is defined to be
\begin{equation}\label{wittpoly}
w_k(X_0,X_1,\dots) := \sum_{i=0}^kp^iX_i^{p^{k-i}} = X_0^{p^k} + pX_1^{p^{k-1}} + \cdots + p^kX_k.
\end{equation}
\begin{thm}\cite[Theorem 6, pp. 40]{serre}\label{theorem:Witt:op:poly}
    For every $\Phi\in\mathbb{Z}[X,Y]$,
    there exist a unique sequence $(\phi_0, \phi_1,\ldots)$ of elements of $\mathbb{Z}[X_0, Y_0, X_1, Y_1, \dots]$ such that for all $k\in\mathbb{N}_0$, 
    $$w_k(\phi_0,\phi_1,\dots) = \Phi(w_k(X_0,X_1,\dots), w_k(Y_0,Y_1,\dots)).$$
\end{thm}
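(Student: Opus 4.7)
The plan is to establish both uniqueness and existence by setting up a recursive formula for $\phi_k$ that is clearly well-defined over $\mathbb{Q}[X_0,Y_0,X_1,Y_1,\ldots]$, and then to prove by induction on $k$ that each $\phi_k$ actually has integer coefficients. The key observation that drives the construction is the identity
\[
w_k(Z_0,Z_1,\ldots) = w_{k-1}(Z_0^p,Z_1^p,\ldots,Z_{k-1}^p) + p^k Z_k,
\]
which is immediate from \eqref{wittpoly} and lets us isolate $\phi_k$ at each stage.

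First, for uniqueness (and existence over $\mathbb{Q}$), I would read off from the above identity that the equation $w_k(\phi_0,\phi_1,\ldots,\phi_k) = \Phi(w_k(X),w_k(Y))$ forces
\[
\phi_k \;=\; \frac{1}{p^k}\Bigl(\Phi\bigl(w_k(X),w_k(Y)\bigr) - \sum_{i=0}^{k-1} p^i\,\phi_i^{p^{k-i}}\Bigr).
\]
Starting from $\phi_0 = \Phi(X_0,Y_0)$, this defines $\phi_0,\phi_1,\phi_2,\ldots$ uniquely as elements of $\mathbb{Q}[X_0,Y_0,X_1,Y_1,\ldots]$, which immediately gives uniqueness and existence in the rational polynomial ring.

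The substantive content, and the main obstacle, is showing that the rational expression above always lies in $\mathbb{Z}[X_0,Y_0,X_1,Y_1,\ldots]$. I would proceed by induction on $k$ using the standard congruence lemma: if $f\equiv g\pmod{p^m}$ in a polynomial ring over $\mathbb{Z}$, then $f^p \equiv g^p \pmod{p^{m+1}}$ (expand $(g+p^m h)^p$ and note that every cross term has a factor of at least $p \cdot p^m$ except $g^p$ itself). Iterating yields $f^{p^n}\equiv g^{p^n}\pmod{p^{m+n}}$, and the $m=1$ case together with the Frobenius identity $\phi(X^p,Y^p) \equiv \phi(X,Y)^p \pmod{p}$ (valid for any $\phi\in\mathbb{Z}[X_0,Y_0,\ldots]$) will be the workhorse.

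Assuming $\phi_0,\ldots,\phi_{k-1}\in\mathbb{Z}[X_0,Y_0,\ldots]$, I would argue as follows. Since $\Phi$ has integer coefficients and $w_k(X) \equiv w_{k-1}(X_0^p,\ldots,X_{k-1}^p)\pmod{p^k}$,
\[
\Phi\bigl(w_k(X),w_k(Y)\bigr) \equiv \Phi\bigl(w_{k-1}(X_0^p,\ldots),w_{k-1}(Y_0^p,\ldots)\bigr) \pmod{p^k}.
\]
Applying the induction hypothesis at level $k-1$ after the substitution $X_j\mapsto X_j^p$, $Y_j\mapsto Y_j^p$, the right-hand side equals $w_{k-1}\bigl(\phi_0(X^p,Y^p),\ldots,\phi_{k-1}(X^p,Y^p)\bigr)$. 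The congruence lemma applied to $\phi_i(X^p,Y^p)\equiv\phi_i(X,Y)^p\pmod{p}$ gives
\[
\phi_i(X^p,Y^p)^{p^{k-1-i}} \equiv \phi_i(X,Y)^{p^{k-i}} \pmod{p^{k-i}},
\]
so multiplying by $p^i$ and summing over $i$,
\[
w_{k-1}\bigl(\phi_0(X^p,Y^p),\ldots\bigr) \equiv \sum_{i=0}^{k-1} p^i\,\phi_i^{p^{k-i}} \pmod{p^k}.
\]
Thus the bracketed numerator in the recursion for $\phi_k$ is divisible by $p^k$ in $\mathbb{Z}[X_0,Y_0,\ldots]$, which closes the induction. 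The only genuinely delicate part is bookkeeping the exponent of $p$ in the iterated congruence lemma; everything else is substitution and the explicit form of $w_k$.
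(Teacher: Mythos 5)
Your proof is correct and is essentially the standard argument from the paper's cited source (Serre, Local Fields, II \S 6, Theorem 6), which the paper invokes without reproducing: define $\phi_k$ recursively over $\mathbb{Q}$ via the identity $w_k(Z)=w_{k-1}(Z_0^p,\ldots,Z_{k-1}^p)+p^kZ_k$, then prove integrality by induction using $\phi(X^p,Y^p)\equiv\phi(X,Y)^p\pmod p$ together with the lemma that $f\equiv g\pmod{p^m}$ implies $f^{p^n}\equiv g^{p^n}\pmod{p^{m+n}}$. The exponent bookkeeping in your induction step is accurate, so no gaps remain.
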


Let $\Phi_1 = S(X,Y) = X+Y$ and $\Phi_2= M(X,Y) = XY$. By Theorem \ref{theorem:Witt:op:poly} there exists unique sequences of polynomials $(S_0,S_1,\dots)$ and $(M_0,M_1,\dots)$ such that for all $k\in\mathbb{N}_0$,
\begin{align*}
    w_k(S_0,S_1,\dots) &= w_k(X_0,X_1,\dots)+w_k(Y_0,Y_1,\dots), \\
    w_k(M_0,M_1,\dots) &= w_k(X_0,X_1,\dots)w_k(Y_0,Y_1,\dots).
\end{align*}
It is easy to calculate that
$$S_0 = X_0+Y_0, \quad S_1 = -\sum_{i=1}^{p-1}\frac{1}{p}\binom{p}{i}X_0^{i}Y_0^{p-i} + X_1+Y_1,$$
$$M_0 = X_0Y_0, \quad\quad M_1 = X_0^pY_1+X_1Y_0^p+pX_1Y_1.\quad\quad\quad\quad$$
The formulae $S_k$ and $M_k$ for $k>1$ are more complicated. Fortunately, due to our problem,
we do not need to know the explicit expressions of them except for their degrees.

Using the polynomials $w_k$, $S_k$ and $M_k$, we can construct the Witt ring. Let $R$ be a commutative ring with identity. The set of Witt vectors over $R$ is $$W(R)= \{ (a_1,a_2,\cdots)| a_i\in R \text{~for all~} i\in \mathbb{N}_0\}.$$
Define the ghost map as
\begin{align*}
    \omega: W(R) &\longrightarrow R^{\mathbb{N}_0}\\
    \boldsymbol{a} = (a_0,a_1,\dots) &\longmapsto \omega(\boldsymbol{a}) = (w_0(a_0,\dots), w_1(a_0,a_1,\dots),\dots).
\end{align*}

Now let us introduce the addition and multiplication in $W(R)$, denoted by $\oplus$ and $\odot$, respectively.
For any elements $\boldsymbol{a},\boldsymbol{b}\in W(R)$, let
$$\boldsymbol{a}\oplus\boldsymbol{b} = \boldsymbol{c} = (c_0,c_1,\dots),\quad
  \boldsymbol{a}\odot\boldsymbol{b} = \boldsymbol{d} = (d_0,d_1,\dots),$$
where $c_k$ and $d_k$ satisfy $c_k = S_k(a_0, b_0, a_1, b_1, \dots)$ and $d_k = M_k(a_0, b_0, a_1, b_1, \dots)$
 for all $k\in\mathbb{N}_0$.

It can be verified that $W(R)$ is a ring under the operations $\oplus$ and $\odot$, and $W(R)$ is called the ring of Witt vectors or Witt ring over $R$.
The map $\omega$ is a ring homomorphism. If $R$ is a perfect ring of characteristic $p$, then $W(R)$ is a strict $p$-ring with residue ring $R$.

\subsection{Witt rings over finite fields}\label{boxsub}

Throughout the rest of the paper, always assume $R=\F_q$. We will explore the Witt ring $W(\F_q)$ and its relationship with $\Z_q$. Let $T_q$ be the set of Teichm\"{u}ller representatives of $\mathbb{F}_{q}$ in $\mathbb{Z}_q$ and the related Teichm\"{u}ller lifting be $\tau:\mathbb{F}_{q}\rightarrow \Z_q, a\mapsto \tau(a)$. Then $T_q=\{\tau(a) | a\in \F_q\}$, and for each $a\in \F_q$, we have $\tau(a)^q=\tau(a)$ and $\tau(a)\equiv a \pmod{p}$. Conversely, for any $a\in T_q$, let $\widetilde{a}$ be the unique element in $\F_q$ such that $\tau(\widetilde{a})=a$. The ring isomorphism between $W(\F_q)$ and $\Z_q$, also denote by $\tau$, is given by
\begin{equation}\label{zqmap}
  \tau:W(\F_q) \rightarrow \Z_q,\quad  (a_0,a_1^p,a_2^{p^2},\dots)\mapsto \tau(a_0)+\tau(a_1)p+\tau(a_2)p^2+\cdots
\end{equation}
If $q=p$, then $W(\F_p)\cong\Z_p$. Moreover, since $a^p=a$ for $a\in\F_p$, the map \eqref{zqmap} becomes
\begin{equation}\label{zpmap}
  \tau:W(\F_p) \rightarrow \Z_p,\quad  (a_0,a_1,a_2,\dots)\mapsto \tau(a_0)+\tau(a_1)p+\tau(a_2)p^2+\cdots
\end{equation}

Let $m\in \N$. Let $W_m(\F_q)$ denote the finite (or truncated) Witt ring of length $m$ over $\F_q$. The underlying set of $W_m(\F_q)$ is the set $\{(a_0,a_1,\dots,a_{m-1}) \mid a_i\in \F_q\}$. The ring isomorphism between $W_m(\F_q)$ and $\Z_q/p^m \Z_q$, also denote by $\tau$, is given by
\begin{equation}\label{newmap}
  \tau: W_m(\F_q) \rightarrow \Z_q/p^m \Z_q,\quad  (a_0,a_1^p,\dots, a_{m-1}^{p^{m-1}})\mapsto \tau(a_0)+\tau(a_1)p+\cdots+\tau(a_{m-1})p^{m-1}.
\end{equation}
Let $\mathrm{GR}(p^m,h)$ denote the Galois ring of characteristic $p^m$ and cardinality $q^{m}$. It is well known that 
$$\mathrm{GR}(p^m,h)\cong\Z_q/p^m \Z_q\cong W_m(\F_q).$$ 
In particular, $\mathrm{GR}(p,h)\cong \F_q\cong W_1(\F_q)$ and $\mathrm{GR}(p^m,1)\cong \Z/p^m\Z\cong W_m(\F_p).$ 

Let $r\in \N$ and $n\in [0,m-1]$. Write $X_j=(x_{0j},\dots,x_{m-1j})\in W_m(\F_q)$ for $j\in[1,r]$ and define
\begin{equation}\label{sm}
(S_{0}^{(r)},\dots,S_{m-1}^{(r)}):=X_1\oplus\cdots\oplus X_r, \quad (M_{0}^{(r)},\dots,M_{m-1}^{(r)}):=X_1\odot\cdots\odot X_r.
\end{equation}
For a variable $x$, let $\mathrm{wt}(x)$ denote the weighted degree of $x$. The estimates for the upper bounds of degrees of the polynomials $S_n^{(r)}$ and $M_n^{(r)}$ are given below. The proof of the assertion for $S_{n}^{(r)}$ in the special case was given in \cite{caowan}. Here we just prove the assertion for $M_{n}^{(r)}$, and the assertion for $S_{n}^{(r)}$ can be deduced similarly.
\begin{lemma}\label{homcor}
Let $d_j\in \N$ and set $\mathrm{wt}(x_{ij})\leq d_jp^i$ for $i\in [0,n]$ and $j\in [1,r]$. Let $d'=\max\{d_j\mid j\in [1,r]\}$ and $d=\sum_{j=1}^{r}d_j$. Then

\textup{(i)} $S_{n}^{(r)}$ is a polynomial with integer coefficients in $(n+1)r$ variables $x_{ij}(i\in [0,n], j\in [1,r])$, and the weighted degree of $S_{n}^{(r)}$ is not more than $d'p^n$. In particular, if $\mathrm{wt}(x_{ij})=p^i$ for $i\in [0,n]$ and $j\in [1,r]$, then $S_n^{(r)}$ is weighted homogeneous of weighted degree $p^n$.

\textup{(ii)} $M_{n}^{(r)}$ is a polynomial with integer coefficients in $(n+1)r$ variables $x_{ij}(i\in [0,n], j\in [1,r])$, and the weighted degree of $M_{n}^{(r)}$ is not more than $dp^n$. In particular, if $\mathrm{wt}(x_{ij})=p^i$ for $i\in [0,n]$ and $j\in [1,r]$, then $M_{n}^{(r)}$ is weighted homogeneous of weighted degree $rp^n$.
\end{lemma}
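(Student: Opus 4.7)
The plan is to prove part (ii) by induction on $n$, exploiting the defining identity
$$w_n(M_0^{(r)}, M_1^{(r)}, \dots, M_n^{(r)}) = \prod_{j=1}^r w_n(x_{0j}, x_{1j}, \dots, x_{nj})$$
that arises from the fact that the ghost map $\omega$ is a ring homomorphism; part (i) then follows by the same strategy applied to the analogous ghost identity for summation, which is why it can be handled by the reference given.

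First I would unpack the right-hand side. Since $w_n(x_{0j}, \dots, x_{nj}) = \sum_{i=0}^n p^i x_{ij}^{p^{n-i}}$ and the prime $p$ carries no weighted degree, each summand has weighted degree $d_j p^i \cdot p^{n-i} = d_j p^n$, so $w_n(X_j)$ is itself weighted homogeneous of weighted degree $d_j p^n$. Multiplying over $j \in [1,r]$ then yields a polynomial that is weighted homogeneous of weighted degree $\sum_{j=1}^r d_j p^n = d p^n$.

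For the inductive step I would rearrange the defining identity to isolate $M_n^{(r)}$:
$$p^n M_n^{(r)} = \prod_{j=1}^r w_n(X_j) - \sum_{i=0}^{n-1} p^i (M_i^{(r)})^{p^{n-i}}.$$
By Theorem \ref{theorem:Witt:op:poly} the right-hand side is divisible by $p^n$ in $\Z[x_{ij}]$, so dividing through merely rescales coefficients and does not introduce any new monomials. The inductive hypothesis gives that $M_i^{(r)}$ has weighted degree at most $d p^i$ for $i<n$, so $(M_i^{(r)})^{p^{n-i}}$ has weighted degree at most $d p^n$; combined with the bound on $\prod_j w_n(X_j)$ just obtained, this forces $M_n^{(r)}$ to have weighted degree at most $d p^n$. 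The base case $n=0$ is handled directly by $M_0^{(r)} = x_{01} \cdots x_{0r}$, of weighted degree $\sum_j d_j = d$.

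The weighted-homogeneity assertion in the special case $d_j = 1$ (so $d = r$) is obtained from the same induction: both $\prod_j w_n(X_j)$ and, by the inductive hypothesis, each $(M_i^{(r)})^{p^{n-i}}$ are weighted homogeneous of the same weighted degree $r p^n$, so the right-hand side is weighted homogeneous of weighted degree $r p^n$, and hence so is $M_n^{(r)}$. The only delicate point in the whole argument is making sure that division by $p^n$ is legitimate in $\Z[x_{ij}]$ and does not enlarge the weighted degree; the former is exactly the content of Theorem \ref{theorem:Witt:op:poly}, and the latter is automatic because division by a nonzero integer leaves the set of monomials with nonzero coefficient unchanged (or a subset thereof), so no genuine obstacle appears beyond careful bookkeeping.
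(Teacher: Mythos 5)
Your argument follows essentially the same route as the paper: induction on $n$ via the ghost identity $w_n(M_0^{(r)},\dots,M_n^{(r)})=\prod_j w_n(X_j)$, isolating $M_n^{(r)}$ after subtracting $\sum_{i<n}p^i(M_i^{(r)})^{p^{n-i}}$, and invoking Theorem \ref{theorem:Witt:op:poly} for integrality and hence divisibility by $p^n$. The degree bounds are handled in the same way.

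Two small points. First, a minor imprecision: you describe $w_n(X_j)$ as ``weighted homogeneous of weighted degree $d_jp^n$'' under the hypothesis $\mathrm{wt}(x_{ij})\leq d_jp^i$; with an inequality on the weights the correct statement is merely that each summand, and hence $w_n(X_j)$, has weighted degree \emph{at most} $d_jp^n$ (homogeneity is only available in the special case $\mathrm{wt}(x_{ij})=p^i$), though this does not affect the final bound. Second, and more substantively, your homogeneity argument for the case $\mathrm{wt}(x_{ij})=p^i$ concludes that $M_n^{(r)}$ is weighted homogeneous of weighted degree $rp^n$ because the right-hand side of the identity is; but that inference requires $M_n^{(r)}\neq 0$, since the difference of two weighted-homogeneous polynomials of the same degree could vanish. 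The paper covers this by observing that the variables $x_{nj}$ appear in $\prod_j w_n(X_j)$ but not in $\sum_{i=0}^{n-1}p^i(M_i^{(r)})^{p^{n-i}}$, so the difference cannot be zero. You should add this one-line observation to close the argument.
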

\begin{proof}
It is obvious that the polynomial $M_n^{(r)}$ is determined by the first $n+1$ coordinates of $X_j$ with $j\in[1,r]$. From \eqref{sm} and Theorem \ref{theorem:Witt:op:poly} we conclude that the polynomial $M_{n}^{(r)}$ has integer coefficients and that
$
  \omega_n(M_{0}^{(r)},\dots,M_{n}^{(r)})=\prod_{j=1}^{r}\omega_n(X_j),
$
which in expansion by \eqref{wittpoly} is
\begin{equation}\label{epp}
  (M_{0}^{(r)})^{p^n}+p(M_{1}^{(r)})^{p^{n-1}}+\cdots+p^n(M_{n}^{(r)})=\prod_{j=1}^{r}(x_{0j}^{p^n}+p x_{1j}^{p^{n-1}}+\cdots+p^n x_{nj}).
\end{equation}
Thus we have
\begin{equation}\label{ss}
  M_{n}^{(r)}=\frac{1}{p^n}\left(\prod_{j=1}^{r}(x_{0j}^{p^n}+p x_{1j}^{p^{n-1}}+\cdots+p^n x_{nj})-\sum_{i=0}^{n-1}p^i(M_{i}^{(r)})^{p^{n-i}}\right).
\end{equation}
  Since $M_{n}^{(r)}$ has integer coefficients, the factor $\frac{1}{p^n}$ in \eqref{ss} will be cancelled eventually. We make use of induction on $n$ to show that $M_{n}^{(r)}$ is of weighted degree at most $dp^n$ in $(n+1)r$ variables $x_{ij}(i\in [0,n], j\in [1,r])$. The case of $n=0$ in which $M_{0}^{(r)}=x_{01}\cdots x_{0r}$ is trivially verified. We assume that $M_{k}^{(r)}$ is of weighted degree at most $dp^k$ in $(k+1)r$ variables $x_{ij}(i\in [0,k], j\in [1,r])$ for $0\leq k\leq n-1$. From \eqref{ss}, we see that the sum $\sum_{i=0}^{n-1}p^i(M_{i}^{(r)})^{p^{n-i}}$ is of weighted degree at most $dp^{n}$ in $nr$ variables $x_{ij}(i\in [0,n-1], j\in [1,r])$. Note that by \eqref{epp} the product $\prod_{j=1}^{r}\omega_n(X_j)$ in \eqref{ss} is of weighted degree at most $dp^{n}$ in $(n+1)r$ variables $x_{ij}(i\in [0,n], j\in [1,r])$ with the variables $x_{nj}(j\in [1,r])$ not occurring in $\sum_{i=0}^{n-1}p^i(M_{i}^{(r)})^{p^{n-i}}$, which implies that $M_{n}^{(r)}\neq 0$ that will be useful to prove the weighted homogeneous case later. Thus we conclude that $M_{n}^{(r)}$ is of weighted degree at most $dp^n$ in $(n+1)r$ variables $x_{ij}(i\in [0,n], j\in [1,r])$. If $\mathrm{wt}(x_{ij})=p^i$ for $i\in [0,n]$ and $j\in [1,r]$, we can similarly show that $M_{n}^{(r)}$ is weighted homogeneous of weighted degree $rp^n$. 
\end{proof}

Throughout the rest of this subsection, let $X_j=(x_{0j}, x_{1j}^p,\dots,x_{{m-1}j}^{p^{m-1}})\in W_m(\F_q)$ for $j\in[1,r]$. Then $\tau(X_j)=\sum_{i=0}^{m-1}\tau(x_{ij})p^i\in \Z_q/p^m\Z_q$, where $\tau$ denotes the ring isomorphism between $W(R)$ and $\Z_q/p^m\Z_q$ given by \eqref{newmap}. Given a vector $\alpha=(\alpha_1,\dots,\alpha_r)\in \N_0^r$, we set $|\alpha|:=\alpha_1+\cdots+\alpha_r$.

\begin{example}
With the above notations, define
\begin{equation*}
(h_0,\dots,h_{m-1}):=\underbrace{X_1\odot\cdots\odot X_1}_{\alpha_1}\odot\cdots\odot\underbrace{X_r\odot\cdots\odot X_r}_{\alpha_r}.
\end{equation*}
By Lemma \ref{homcor}, $h_n$ with $n\in[0,m-1]$ is a homogeneous polynomial of degree $|\alpha|p^n$ in $(n+1)r$ variables $x_{ij}(i\in [0,n],j\in[1,r])$.
\end{example}

We also need to estimate the degrees of the terms appearing at the most right side in \eqref{newmap} after the multiplications given by \eqref{sm}. The lemma below will be used in the later proof in which the variables $\tau(x_{ij})$ are replaced by the polynomials $g_{ij}$.

\begin{lemma}\label{jjkk}
With the above notations, set $g_{ij}:=\tau(x_{ij})$ and $\mathrm{wt}(g_{ij})\leq p^{h\lfloor\frac{i}{h}\rfloor}$ for $j\in[1,r]$ and $i\in[0,m-1]$. Given a vector $\beta=(\beta_{11},\dots,\beta_{\alpha_{1}1};\dots; \beta_{1r},\dots,\beta_{\alpha_{r}r})\in \N_0^{|\alpha|}$, we write $\beta=(\beta_{tl})$ for short. Then
\begin{equation*}
\deg\left(\prod_{l=1}^{r}\prod_{t=1}^{\alpha_l}g_{_{{\beta_{tl}}l}}\right)\leq |\alpha|p^{h\lfloor\frac{|\beta|}{h}\rfloor}.
\end{equation*}
\end{lemma}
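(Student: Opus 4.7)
The plan is to bound the degree of the product directly by sub-additivity of degree followed by a pointwise comparison of floor functions. First, since $\deg(fg)\leq \deg f+\deg g$, applying this $|\alpha|-1$ times to the full product gives
\[
\deg\!\left(\prod_{l=1}^{r}\prod_{t=1}^{\alpha_l}g_{\beta_{tl}l}\right)\;\leq\;\sum_{l=1}^{r}\sum_{t=1}^{\alpha_l}\deg\bigl(g_{\beta_{tl}l}\bigr).
\]
By the standing hypothesis $\mathrm{wt}(g_{ij})\leq p^{h\lfloor i/h\rfloor}$ (which I will carry through to the ordinary degree in the conclusion, interpreting both in the same graded sense established in Lemma \ref{homcor}), each summand is bounded by $p^{h\lfloor \beta_{tl}/h\rfloor}$.

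Next, the key monotonicity step. The components $\beta_{tl}$ are nonnegative integers summing to $|\beta|$, so $\beta_{tl}\leq |\beta|$ for every pair $(t,l)$. Since $x\mapsto \lfloor x/h\rfloor$ is monotone nondecreasing on $\mathbb R_{\geq 0}$, this yields $\lfloor \beta_{tl}/h\rfloor\leq \lfloor |\beta|/h\rfloor$, and hence
\[
p^{h\lfloor \beta_{tl}/h\rfloor}\;\leq\; p^{h\lfloor |\beta|/h\rfloor}\qquad\text{for all }(t,l).
\]
Summing over the $|\alpha|=\sum_{l=1}^r\alpha_l$ index pairs gives the asserted bound $|\alpha|\,p^{h\lfloor |\beta|/h\rfloor}$.

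I expect no substantive obstacle here; the whole argument is one application of sub-additivity of degree plus one application of monotonicity of the floor. The only point requiring a little care is the bookkeeping of the multi-index $\beta=(\beta_{tl})$, namely that it really has $|\alpha|$ coordinates (partitioned into $r$ blocks of sizes $\alpha_1,\dots,\alpha_r$) so that the outer product indeed has exactly $|\alpha|$ factors, matching the prefactor on the right-hand side. One should also note in passing that the estimate is essentially sharp: making one $\beta_{tl}$ equal to $|\beta|$ and the rest $0$ realizes the bound up to lower-order terms, which is what will be needed in the applications downstream.
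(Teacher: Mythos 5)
Your proof is correct and follows essentially the same route as the paper: expand $\deg$ over the product, bound each factor by $p^{h\lfloor \beta_{tl}/h\rfloor}$ using the hypothesis, then apply monotonicity of $t\mapsto\lfloor t/h\rfloor$ together with $\beta_{tl}\leq|\beta|$ and count the $|\alpha|$ factors. The only cosmetic difference is that the paper writes the first step as an equality (degree of a product over an integral domain) while you invoke sub-additivity, which suffices for the upper bound.
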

\begin{proof}
Note that $|\beta|=\sum_{l=1}^{n}\sum_{t=1}^{\alpha_l}\beta_{tl}$. The estimate follows immediately from the inequality
\begin{equation*}
\deg\left(\prod_{l=1}^{r}\prod_{t=1}^{\alpha_l}g_{_{{\beta_{tl}}l}}\right)=\sum_{l=1}^{r}\sum_{t=1}^{\alpha_l}\deg(g_{_{{\beta_{tl}}l}})\leq \sum_{l=1}^{r}\sum_{t=1}^{\alpha_l}p^{h\lfloor\frac{\beta_{tl}}{h}\rfloor}
\leq |\alpha|p^{h\lfloor\frac{|\beta|}{h}\rfloor}.
\end{equation*}
\end{proof}

Finally, we present two lemmas that play the crucial role in the proofs of our main results. Let $\widetilde{s}_n^{(r)}$ and $\widetilde{m}_n^{(r)}$ be two functions such that
\begin{equation}\label{smallsm}
\sum_{j=1}^{r}\left(\sum_{i=0}^{m-1}\tau(x_{ij})p^i\right)=\sum_{n=0}^{m-1}\tau(\widetilde{s}_n^{(r)})p^n, \mbox{     and     } \prod_{j=1}^{r}\left(\sum_{i=0}^{m-1}\tau(x_{ij})p^i\right)=\prod_{n=0}^{m-1}\tau(\widetilde{m}_n^{(r)})p^n.
\end{equation}
Since $\widetilde{s}_n^{(r)}$ and $\widetilde{m}_n^{(r)}$ have fraction degrees (cf. \cite{caowan}), we set $s_n^{(r)}:=(\widetilde{s}_n^{(r)})^{p^n}$ and $m_n^{(r)}:=(\widetilde{m}_n^{(r)})^{p^n}$.

\begin{lemma}[\cite{caowan}]\label{smm}
The polynomials $s_n^{(r)}$ and $m_n^{(r)}$ have integer coefficients. Moreover, $s_n^{(r)}$ is homogeneous of degree $p^n$ and $m_n^{(r)}$ is homogeneous of degree $rp^n$ in $(n+1)r$ variables:
    \begin{align*}
s_n^{(r)}& =S_n^{(r)}(x_{01},\dots,x_{0r};x_{11}^{p},\dots,x_{1r}^{p};\dots;x_{n1}^{p^n},\dots,x_{nr}^{p^n}), \mbox{  and  } \\
m_n^{(r)}& =M_n^{(r)}(x_{01},\dots,x_{0r};x_{11}^{p},\dots,x_{1r}^{p};\dots;x_{n1}^{p^n},\dots,x_{nr}^{p^n}).
  \end{align*}
\end{lemma}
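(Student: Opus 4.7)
The plan is to translate between multi-fold addition and multiplication in $\Z_q/p^m\Z_q$ and Witt arithmetic in $W_m(\F_q)$ via the ring isomorphism $\tau$ of \eqref{newmap}, and then deduce integrality and homogeneity directly from Lemma \ref{homcor}.

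First I would fix $X_j=(x_{0j},x_{1j}^p,\ldots,x_{m-1j}^{p^{m-1}}) \in W_m(\F_q)$ for $j\in[1,r]$. By \eqref{newmap}, $\tau(X_j)=\sum_{i=0}^{m-1}\tau(x_{ij})p^i$. Since $\tau$ is a ring homomorphism,
\begin{align*}
\tau(X_1\oplus\cdots\oplus X_r) &= \sum_{j=1}^{r}\sum_{i=0}^{m-1}\tau(x_{ij})p^i, \\
\tau(X_1\odot\cdots\odot X_r) &= \prod_{j=1}^{r}\sum_{i=0}^{m-1}\tau(x_{ij})p^i,
\end{align*}
so the right-hand sides above coincide with those of \eqref{smallsm}.

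On the other hand, by \eqref{sm} the inputs to $\tau$ on the left are $(S_0^{(r)},\ldots,S_{m-1}^{(r)})$ and $(M_0^{(r)},\ldots,M_{m-1}^{(r)})$. To feed these into $\tau$ in the parametrization used in \eqref{newmap}, one must write the $n$-th coordinate as $a_n^{p^n}$. Since Frobenius is a bijection on $\F_q$, the $p^n$-th root $a_n$ is unique and, by the very definition of $\widetilde s_n^{(r)}$ and $\widetilde m_n^{(r)}$ given right after \eqref{smallsm}, equals $\widetilde s_n^{(r)}$ in the additive case and $\widetilde m_n^{(r)}$ in the multiplicative case. Applying $\tau$ produces the left-hand sides $\sum_n\tau(\widetilde s_n^{(r)})p^n$ and $\sum_n\tau(\widetilde m_n^{(r)})p^n$ of \eqref{smallsm}, and raising back to the $p^n$-th power identifies $s_n^{(r)}=(\widetilde s_n^{(r)})^{p^n}$ with $S_n^{(r)}$ specialized by the substitution $X_{ij}\mapsto x_{ij}^{p^i}$; this is precisely the displayed formula for $s_n^{(r)}$, and the derivation for $m_n^{(r)}$ is identical.

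Finally, integrality of coefficients is immediate, since $S_n^{(r)}$ and $M_n^{(r)}$ already have integer coefficients by Lemma \ref{homcor}, and we have only substituted monomials. For homogeneity, I would invoke Lemma \ref{homcor} with weights $\mathrm{wt}(X_{ij})=p^i$: the lemma states that $S_n^{(r)}$ is weighted homogeneous of weighted degree $p^n$ and $M_n^{(r)}$ is weighted homogeneous of weighted degree $rp^n$. Under the substitution $X_{ij}\mapsto x_{ij}^{p^i}$, each variable of weighted degree $p^i$ is replaced by a monomial of ordinary degree $p^i$ in the $x_{ij}$, so weighted homogeneity translates directly into ordinary homogeneity of the same degree in the $(n+1)r$ variables $x_{ij}$. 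The only conceptual obstacle is the careful bookkeeping between the two parametrizations of Witt vectors entering \eqref{newmap}; once that translation is set down cleanly, the lemma follows at once from Lemma \ref{homcor}.
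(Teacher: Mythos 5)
Your proof is correct, and since the paper cites Lemma \ref{smm} from \cite{caowan} rather than proving it, you have supplied the missing argument in a way that is fully consistent with the machinery the paper sets up. The core of the argument is exactly the bookkeeping you identify: the map $\tau$ in \eqref{newmap} is stated on vectors written in the twisted form $(a_0,a_1^p,\ldots,a_{m-1}^{p^{m-1}})$, so after computing $X_1\oplus\cdots\oplus X_r=(T_0,\dots,T_{m-1})$ with $T_n=S_n^{(r)}(x_{01},\dots;x_{11}^p,\dots;\ldots;x_{n1}^{p^n},\dots)$ (by \eqref{sm} applied to the substituted inputs), one must rewrite $T_n$ as the $p^n$-th power of its unique Frobenius preimage before applying \eqref{newmap}, which forces $\widetilde s_n^{(r)}=T_n^{1/p^n}$ and hence $s_n^{(r)}=T_n$. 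The multiplicative case is identical. Integer coefficients and homogeneity then fall out of Lemma \ref{homcor}, since the monomial substitution $x_{ij}\mapsto x_{ij}^{p^i}$ preserves integrality and converts weighted homogeneity of weighted degree $p^n$ (respectively $rp^n$) into ordinary homogeneity of the same degree. One small point of precision worth flagging: when you write that ``the inputs to $\tau$ on the left are $(S_0^{(r)},\ldots,S_{m-1}^{(r)})$,'' what you mean --- and what you later state correctly --- is that they are $S_n^{(r)}$ with the $p$-power substitution already performed, since at that stage the Witt vectors $X_j$ are the twisted ones rather than the plain $(x_{0j},\dots,x_{m-1,j})$ of \eqref{sm}. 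With that reading the proof is sound and matches the approach of \cite{caowan}.
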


\begin{lemma}[\cite{caowan}]\label{keylem}
 Let $r\in \N$ and $\sum_{i=0}^{m-1}\tau(x_{ij})p^i\in \Z_q/p^m\Z_q$ with $x_{ij}\in \F_q, j\in [1,r]$. Suppose $\sum_{j=1}^{r}\left(\sum_{i=0}^{m-1}\tau(x_{ij})p^i\right)=\sum_{n=0}^{m-1}\tau(\widetilde{s}_n^{(r)})p^n$. For $n\in [0,m-1]$, let $s_n^{(r)}:=(\widetilde{s}_n^{(r)})^{p^n}$. Then the following statements are equivalent:

\textup{(i)}  $\sum_{j=1}^{r}\sum_{i=0}^{m-1}\tau(x_{ij})p^i\equiv 0 \mod {p^m}$.

\textup{(ii)}  $\widetilde{s}_0^{(r)}=\widetilde{s}_1^{(r)}=\dots=\widetilde{s}_{m-1}^{(r)}=0$.

\textup{(iii)} $s_0^{(r)}=s_1^{(r)}=\dots=s_{m-1}^{(r)}=0$.
\end{lemma}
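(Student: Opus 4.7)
The plan is to reduce the problem over the general box $\mathcal B_m$ to a counting problem in $nm$ variables over $\F_q$, then apply the already-established $p$-adic estimates (Theorems~\ref{axkatzthm} and~\ref{mrkthm}) to that finite-field system. First, I would invoke the hypothesis $\mathcal B_m\overset{m_s}{\sim}\mathcal T_m$ to reduce the count on $\mathcal B_m$ to the count on the Teichm\"uller box $\mathcal T_m$. Writing each $X\in\mathcal T_m$ uniquely as $X=\sum_{i=0}^{m-1}\tau(x_i)p^i$ with $x_i\in\F_q^n$, the box is parametrized by the $nm$ variables $x_{ij}$ with $i\in[0,m-1]$, $j\in[1,n]$.

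Next, I would translate each congruence $f_k(X)\equiv 0\pmod{p^{m_k}}$ into a system of polynomial equations over $\F_q$. Expanding every monomial of $f_k$ in the Teichm\"uller coordinates and iterating the Witt addition, Lemma~\ref{smm} lets me write
\[
f_k(X)=\sum_{\ell=0}^{\infty}\tau\bigl(\widetilde s_\ell^{(f_k)}\bigr)p^\ell,\qquad s_\ell^{(f_k)}:=\bigl(\widetilde s_\ell^{(f_k)}\bigr)^{p^\ell}\in\F_q[x_{ij}],
\]
where $s_\ell^{(f_k)}$ involves only the coordinates $x_{ij}$ with $i\leq\ell$. Lemma~\ref{keylem} then shows that $f_k(X)\equiv 0\pmod{p^{m_k}}$ is equivalent to $s_0^{(f_k)}=\dots=s_{m_k-1}^{(f_k)}=0$. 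Applying Lemmas~\ref{homcor} and~\ref{jjkk} to the nested Witt sums and products produced by the Teichm\"uller expansion of the monomials of $f_k$, I would establish the key degree bound $\deg s_\ell^{(f_k)}\leq p^{\ell}\deg f_k$.

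With this translation in place, $|V|$ equals the number of $\F_q$-points of the system $\{\,s_\ell^{(f_k)}=0:k\in[1,s],\ \ell\in[0,m_k-1]\,\}$ in $nm$ variables, so Theorem~\ref{axkatzthm} applies. A direct computation gives
\[
\sum_{k=1}^{s}\sum_{\ell=0}^{m_k-1}\deg s_\ell^{(f_k)}\leq\sum_{k=1}^{s}\frac{p^{m_k}-1}{p-1}\deg f_k,\qquad \max_{k,\ell}\deg s_\ell^{(f_k)}\leq \max_{k}p^{m_k-1}\deg f_k,
\]
and substituting into Ax--Katz yields precisely the general bound. For the specialisation $m\geq m_s=\dots=m_1$, I would observe that for every $k$ the Witt-translated conditions $s_\ell^{(f_k)}=0$ ($0\leq\ell\leq m_1-1$) depend only on the low coordinates $x_{ij}$ with $i<m_1$; hence the $n(m-m_1)$ high coordinates range freely over $\F_q$, contributing a factor $q^{n(m-m_1)}$ to $|V|$, and in the remaining $nm_1$ variables I apply Theorem~\ref{axkatzthm} (when $m_1=1$) or Theorem~\ref{mrkthm} (when $m_1>1$) to recover the three-case piecewise bound.

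The main obstacle, I expect, will be the initial reduction from a general $\mathcal B_m$ to the Teichm\"uller box $\mathcal T_m$ under the relation $\overset{m_s}{\sim}$: one must verify that swapping boxes does not spoil the Teichm\"uller-degree bookkeeping of the second step, and this is exactly where the ``low algebraic relation'' content of $\overset{m_s}{\sim}$ must be used. A secondary subtlety is the specialisation case, where to get the bound in terms of the original $\deg f_k$ rather than inflated Witt-expanded degrees one should keep the polynomials $f_k$ intact over $\Z_q/p^{m_1}\Z_q$ and feed them directly into Theorem~\ref{mrkthm} in the reduced $nm_1$ variables, instead of Witt-expanding them when $m_k=m_1$ is uniform.
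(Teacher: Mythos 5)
Your proposal does not address the statement you were asked to prove. The statement is Lemma~\ref{keylem}: for fixed Teichm\"uller digits $x_{ij}\in\F_q$, the congruence $\sum_{j=1}^{r}\sum_{i=0}^{m-1}\tau(x_{ij})p^i\equiv 0\pmod{p^m}$ is equivalent to the vanishing of all the digit polynomials $\widetilde{s}_n^{(r)}$, and in turn to the vanishing of the twisted polynomials $s_n^{(r)}=(\widetilde{s}_n^{(r)})^{p^n}$. What you sketched instead is the counting argument for the main result (essentially Theorem~\ref{thmforsystem}): reduction to the Teichm\"uller box, degree bookkeeping via Lemmas~\ref{homcor} and~\ref{jjkk}, and an application of the Ax--Katz and Katz--Marshall--Ramage bounds. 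Worse, your sketch explicitly \emph{invokes} Lemma~\ref{keylem} (``Lemma~\ref{keylem} then shows that $f_k(X)\equiv 0\pmod{p^{m_k}}$ is equivalent to $s_0^{(f_k)}=\dots=s_{m_k-1}^{(f_k)}=0$''), so as an argument for the lemma itself it is circular, and none of the Ax--Katz machinery is relevant to it.

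What the lemma actually requires is short and purely structural. By hypothesis the sum equals $\sum_{n=0}^{m-1}\tau(\widetilde{s}_n^{(r)})p^n$, and every element of $\Z_q/p^m\Z_q$ has a \emph{unique} expansion $\sum_{n=0}^{m-1}\tau(c_n)p^n$ with $c_n\in\F_q$; since the zero element has all digits $c_n=0$, the congruence in (i) holds if and only if $\widetilde{s}_n^{(r)}=0$ for every $n\in[0,m-1]$, which is (ii). The equivalence of (ii) and (iii) follows because $s_n^{(r)}=(\widetilde{s}_n^{(r)})^{p^n}$ and the Frobenius $a\mapsto a^{p}$ is injective on the field $\F_q$, so $(\widetilde{s}_n^{(r)})^{p^n}=0$ if and only if $\widetilde{s}_n^{(r)}=0$. (The paper itself quotes this lemma from the reference rather than reproving it, but that is the content a proof must supply.) Your material on degree bounds and the box relation $\overset{m_s}{\sim}$ belongs to the proofs of Theorems~\ref{thmforsing} and~\ref{thmforsystem}, not here.
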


\section{Main results}\label{sect3}

A polynomial $g$ over $\Z_q$ is called a Teichm\"uller polynomial if all of its coefficients are
Teichm\"uller elements in $T_q$, and $g$ is called reduced if its degree in each variable is at most $q-1$. Given a Teichm\"uller polynomial $g(X)=\sum_{j=1}^{t}b_jX^{u_j}\in\Z_q[x_1,\dots, x_n]$ with $b_j\in T_q$ for all $j\in[1,t]$, we define $\widetilde{g}(X):=\sum_{j=1}^{t}\widetilde{b}_jX^{u_j}\in\F_q[x_1,\dots, x_n]$. The following lemma describes the box $\mathcal B_m$ in terms of the image of a reduced Teichm\"uller polynomial system, and the proof for $m=1$ can be found in \cite[Lemma 4.1]{caowan}. Let $X_m$ denote the collection of $nm$ variables $x_{ij}$ with $i\in[0,m-1],j\in[1,n]$.
\begin{lemma}\label{boxlem}
Let $\mathcal B_m$ be a subset of $\Z_q^n$ with $q^{nm}$ elements such that $\mathcal B_m$ modulo $p^m$ is equal to $(\Z_q/p^m \Z_q)^n$. Then there exists a unique system of reduced Teichm\"uller polynomials $g_{ij}(X_m)$ $(i\geq m,n\geq j\geq 1)$ in $nm$ variables with total degree bounded by $nm(q-1)$ such that for any $Y\in \mathcal B_m$, we have
\begin{equation}\label{zz1intro}
     Y=X_m + (g_{m1}(X_m),\dots,g_{mn}(X_m))p^m+ (g_{m+11}(X_m),\dots,g_{m+1n}(X_m))p^{m+1}+\cdots,
\end{equation}
where $X_m=\sum_{i=0}^{m-1}(x_{i1},\dots,x_{in})p^i\in \mathcal T_m$ is the Teichm\"uller lifting of the modulo $p^m$ reduction of $Y$. In particular, if $\mathcal B_m$ is in split form, then each $g_{ij}$ has total degree at most $m(q-1)$.
\end{lemma}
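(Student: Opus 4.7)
The plan is to reduce the statement to polynomial interpolation of $\F_q$-valued functions on $\F_q^{nm}$ via the bijection between $\mathcal{T}_m$ and $\mathcal{B}_m$ induced by reduction modulo $p^m$. Both sets have $q^{nm}$ elements and both reduce bijectively onto $(\Z_q/p^m\Z_q)^n$, so there is a unique bijection $\sigma:\mathcal{T}_m\to\mathcal{B}_m$ with $\sigma(X_m)\equiv X_m\pmod{p^m}$. I parametrize $X_m\in\mathcal{T}_m$ by its $nm$ Teichm\"uller coordinates: writing $X_m=\sum_{k=0}^{m-1}(x_{k1},\dots,x_{kn})p^k$ with $x_{kj}\in T_q$, these correspond under $\tau$ to an $nm$-tuple $(\widetilde{x}_{kj})\in\F_q^{nm}$. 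For each $Y=\sigma(X_m)$, the unique Teichm\"uller expansion $Y=\sum_{i\geq 0}(\tau(y_{i1}),\dots,\tau(y_{in}))p^i$ has its first $m$ digits prescribed by $X_m$, so the coordinates $y_{ij}\in\F_q$ for $i\geq m$ and $j\in[1,n]$ are the new data, each of them a well-defined function $\varphi_{ij}:\F_q^{nm}\to\F_q$ of the $\widetilde{x}_{kj}$'s.

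Next I would invoke the standard fact that every function $\F_q^{nm}\to\F_q$ is represented by a unique polynomial in $\F_q[\widetilde{x}_{01},\dots,\widetilde{x}_{m-1,n}]$ of degree at most $q-1$ in each variable. Let $\bar g_{ij}$ be the reduced polynomial representing $\varphi_{ij}$; its total degree is then at most $nm(q-1)$. Lifting each coefficient of $\bar g_{ij}$ through the Teichm\"uller section $\tau$ produces a reduced Teichm\"uller polynomial $g_{ij}\in\Z_q[x_{01},\dots,x_{m-1,n}]$ with the same monomial support. The displayed identity holds term-by-term, reading $g_{ij}(X_m)$ as $\tau(\bar g_{ij}(\widetilde{x}_{01},\dots,\widetilde{x}_{m-1,n}))=\tau(y_{ij})$. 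Uniqueness of the system $(g_{ij})$ is inherited from the uniqueness of the reduced $\F_q$-representative of each $\varphi_{ij}$.

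For the split case $\mathcal{B}_m=\mathcal{I}_1\times\cdots\times\mathcal{I}_n$, the $j$-th coordinate of $\sigma(X_m)$ depends only on the $j$-th coordinate of $X_m$, so $\varphi_{ij}$ factors through the $m$ variables $\widetilde{x}_{0j},\dots,\widetilde{x}_{m-1,j}$ alone, and its reduced representative has total degree at most $m(q-1)$. The main subtlety, and effectively the only obstacle, is fixing the convention for evaluating a Teichm\"uller polynomial on Teichm\"uller inputs: naive ring-theoretic substitution in $\Z_q$ need not land in $T_q$, so $g_{ij}(X_m)$ must be interpreted through the Teichm\"uller section (as in the $m=1$ treatment of \cite[Lemma 4.1]{caowan}) in order for both the displayed identity and the uniqueness statement to make sense simultaneously. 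Once this convention is fixed, the argument reduces to standard interpolation of $\F_q$-valued functions on $\F_q^{nm}$.
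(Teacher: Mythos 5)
Your proof is correct and follows essentially the same route as the paper's: identify $\mathcal B_m$ with $\mathcal T_m\cong\F_q^{nm}$ via the mod-$p^m$ bijection, observe that each higher Teichm\"uller digit of $Y$ is an $\F_q$-valued function of the $nm$ coordinates of $X_m$, interpolate that function by its unique reduced polynomial over $\F_q$, and Teichm\"uller-lift, with the split case handled by noting that each coordinate function depends on only $m$ of the variables. The only cosmetic difference is that the paper builds the $g_{ij}$ iteratively (extract the digit at level $m$, subtract, repeat at level $m+1$, etc.) while you extract all digits at once from the Teichm\"uller expansion of $Y$; your explicit remark that $g_{ij}(X_m)$ must be read through the Teichm\"uller section rather than as raw substitution in $\Z_q$ is a useful clarification of a convention the paper leaves implicit but relies on in the later applications.
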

\begin{proof} 
For a given $Y=(y_1, \dots, y_n)\in \mathcal B_m$, we can write uniquely
\begin{equation*}
  (y_1, \dots, y_n)=\sum\nolimits_{i=0}^{m-1}(x_{i1},\dots,x_{in})p^i+(x_{m1}, \dots, x_{mn})p^m,
\end{equation*}
where $X_m=\sum_{i=0}^{m-1}(x_{i1},\dots,x_{in})p^i \in \mathcal T_m$ and $(x_{m1}, \dots, x_{mn}) \in \Z_q^n$.
The vector $X_m$ in $\mathcal T_m$ and the vector $Y=(y_1, \dots, y_n)$ in $\mathcal B_m$ determine each other.
In fact, $X_m$ is just the Teichm\"uller lifting of the reduction $Y \mod p^m$, and $Y$ is the unique element in $\mathcal B_m$ with the
same mod $p^m$ reduction as $X_m$. In particular, $(x_{m1}, \dots, x_{mn})$ is also uniquely determined by
 $X_m=\sum_{i=0}^{m-1}(x_{i1},\dots,x_{in})p^i$.

Letting $Y$ run over $\mathcal B_m$, then $X_m$ runs over $\mathcal T_m$ as $\mathcal B_m \mod p^m=(\Z_q/p^m \Z_q)^n$ by assumption.
For each $1\leq j \leq n$, the quantity $x_{mj}$ is a function of $X_m$. This establishes a map from $\mathcal T_m$ to $\Z_q$, and we consider the corresponding map from $\F_q^{nm}$ to $\F_q$, namely
 \begin{equation*}
  \widetilde{g}_{mj}:\quad \F_q^{nm} \rightarrow \F_q,\quad  X_m\mapsto \widetilde{g}_{mj}(X_m).
\end{equation*}
Recall the fact that any map from $\F_q^{nm}$ to $\F_q$ can be expressed uniquely by a reduced polynomial in $nm$ variables with coefficients in $\F_q$. In particular, our map $\widetilde{g}_{mj}$ is a reduced polynomial in $\mathbb{F}_q[x_{01},\ldots, x_{0n},\ldots,x_{m-11},\ldots, x_{m-1n}]$.
Let $g_{mj}$ be the Teichm\"uller lifting of $\widetilde{g}_{mj}$ in $\Z_q[x_{01},\ldots, x_{0n},\ldots,x_{m-11},\ldots, x_{m-1n}]$. Then, we have proved
$$Y = X_m + (g_{m1}(X_m), \cdots, g_{mn}(X_m))p^m + (x_{m+11}, \cdots, x_{m+1n})p^{m+1},$$
where $(x_{m+11}, \cdots, x_{m+1n}) \in \Z_q^n$ is uniquely determined by $X_m$. Continuing this procedure, we find
uniquely determined reduced Teichm\"uller polynomials $g_{ij} \in \Z_q[x_{01},\ldots, x_{0n},\ldots,x_{m-11},\ldots, x_{m-1n}]$ ($i\geq m$, $1\leq j\leq n$) such that \eqref{zz1intro} holds. Since each variable in $g_{ij}$ has degree at most $q-1$, the total degree of $g_{ij}$ is at most $nm(q-1)$. In particular, if $\mathcal B_m$ is in split form, then $g_{ij} \in \Z_q[x_{0j},\ldots, x_{m-1j}]$ and hence $\deg (g_{ij})\leq m(q-1)$. 
The lemma is proved.
\end{proof}

\begin{notation}
 For convenience, we set $g_{ij}=x_{ij}$ for $i\in [0,m-1], j\in [1,n]$, and simply write $\mathcal B_m=\mathcal T_m[g_{ij}:i\in \N_0,j\in [1,n]]$.
\end{notation}

\begin{defn}
Let $m'\in \N$. If $\deg( g_{ij})\leq p^{h\lfloor\frac{i}{h}\rfloor}$ for all $i\in[0,m'-1],j\in[1,n]$, we call that the box $\mathcal B_m$ is $m'$-algebraically close to the Teichm\"uller box $\mathcal T_m$, and denote by $\mathcal B_m \overset{m'}{\sim} \mathcal T_m$.  
\end{defn}

\begin{example}
Below are three common examples for $\mathcal B_m\overset{m'}{\sim} \mathcal T_m$:
\begin{enumerate} 
  \item $\mathcal B_m\overset{m'}{\sim} \mathcal T_m$ for all $1\leq m'\leq m$ because $\deg(g_{ij})=1$ for all $i\in [0,m'-1],j\in [1,n]$.
  \item If $\mathcal B_1$ is in split form, then $\mathcal B_1\overset{m'}{\sim} \mathcal T_1$ for any $m'\in \N$ because $\deg(g_{ij})\leq q-1$ for all $i\geq 1,j\in [1,n]$ by Lemma \ref{boxlem}.
  \item If $\mathcal B_m=\mathcal T_m$, then $\mathcal B_m\overset{m'}{\sim} \mathcal T_m$ for any $m'\in \N$  because $\deg(g_{ij})=0$ for all $i\geq m,j\in [1,n]$. Here we take the degree of the zero polynomial to be zero.
\end{enumerate}
\end{example}

Fix a nonzero vector $\alpha=(\alpha_1,\dots,\alpha_n)\in \N_0^n$. For an element $\beta\in \N_0^{|\alpha|}$ of the form
$
  \beta=(\beta_{11},\dots,\beta_{\alpha_{1}1};\beta_{12},\dots,\beta_{\alpha_{2}2};\dots; \beta_{1n},\dots,\beta_{\alpha_{n}n}),
$
we write $\beta=(\beta_{tl})$ for short. Let $X_m$ denote the collection of $nm$ variables $x_{ij}$ with $i\in[0,m-1],j\in[1,n]$. By \eqref{zz1intro}, we have $Y=(y_1,\dots,y_n)=\sum_{i=0}^{\infty}(g_{i1}(X_m),\dots,g_{in}(X_m))p^i$ with $y_j=\sum_{i=0}^{\infty}g_{ij}(X_m)p^i$ and $g_{ij}$ being the Teichm\"uller polynomials in $nm$ variables. Then
\begin{equation}\label{77cc}
\prod_{l=1}^{n}\left(\sum_{k=0}^{\infty}g_{_{kl}}p^k\right)^{\alpha_l}=
\sum\prod_{l=1}^{n}\prod_{t=1}^{\alpha_l}\left(g_{_{{\beta_{tl}}l}}p^{\beta_{tl}}\right)=
\sum\left(\prod_{l=1}^{n}\prod_{t=1}^{\alpha_l}g_{_{{\beta_{tl}}l}}\right)p^{|\beta|},
\end{equation}
where both the sums in \eqref{77cc} run over all the vectors $\beta=(\beta_{tl})\in \N_0^{|\alpha|}$. Now let $f(X)=\sum_{j=1}^{r}a_jX^{u_j}\in\Z_q[x_1,\dots, x_n]$. Suppose $a_j=\sum_{i=1}^{\infty}a_{ij}p^i$ with $a_{ij}\in T_q$. Substituting $Y$ for $X$, by \eqref{77cc} we get that the Teichmüller expansion of $f(Y)$ is
\begin{equation}\label{thmus}
f(Y)=\sum_{j=1}^{r}a_{ij}\sum_{\beta=(\beta_{tl})\in \N_0^{|u_j|}}\left(\prod_{l=1}^{n}\prod_{t=1}^{\alpha_l}g_{_{{\beta_{tl}}l}}\right)p^{i+|\beta|}.
\end{equation}
Note that each $g_{ij}$ is a polynomial in $nm$ variables, so be $f(Y)$.

\subsection{For a single polynomial}
We first consider the single polynomial case, which is relatively easy to deal with and can be extended to the system of polynomials without too much difficulties. 
\begin{thm}\label{thmforsing}
Let $p$ be a prime number and $q = p^h$ with $h\in\mathbb{N}$. Let $m,m'\in \N$. Let $\mathcal B_m=\mathcal T_m[g_{ij}:i\in \N_0,j\in [1,n]]$. Let $f\in \mathbb{Z}_q[x_1,\ldots,x_n]$ be a nonconstant polynomial. Let $V=\{X\in \mathcal B_m: f(X)\equiv 0\mod {p^{m'}}\}$. Assume $\mathcal B_m \overset{m'}{\sim} \mathcal T_m$. In general, we have
\begin{equation}\label{sing4}
\mathrm{ord}_q(|V|)\geq\left\lceil\frac{nm-\frac{p^{m'}-1}{p-1}\deg f}{p^{m'-1}\deg f}\right\rceil^*.
\end{equation}
In particular, further assume $m\geq m'$, then
\begin{numcases}{\mathrm{ord}_q(|V|)\geq}
 \left\lceil n/\deg f-1\right\rceil^*+n(m-m') & if   $m'=1$,\nonumber\\
   \left\lfloor(nm'-1)/2\right\rfloor+n(m-m') &  if $m'>1 \mbox{ and }\deg f> 1$,\label{sing2}\\
  nm-m' & \mbox{otherwise.}\nonumber
\end{numcases}
\end{thm}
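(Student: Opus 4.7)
Proof plan. My strategy is to translate the $p$-adic condition $f(Y) \equiv 0 \pmod{p^{m'}}$ into a system of polynomial equations over $\F_q$ via the Witt ring machinery of Section \ref{sect2}, and then apply the Ax--Katz theorem and its refinements. I expect the main obstacle to be the degree estimate $\deg(s_n) \leq p^n \deg f$ for the resulting polynomials $s_n \in \F_q[x_{ij}]$, which is precisely where the hypothesis $\mathcal B_m \overset{m'}{\sim} \mathcal T_m$ enters; once that is in hand, both bounds reduce to already-known point-counting theorems.

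For the general bound \eqref{sing4}, I would first use Lemma \ref{boxlem} to parametrize $\mathcal B_m$ by $\mathcal T_m$: every $Y \in \mathcal B_m$ has the form $Y_j = \sum_{i \geq 0} g_{ij}(X_m)\, p^i$ for a unique $X_m \in \mathcal T_m \cong \F_q^{nm}$. This identifies $|V|$ with the number of $\tilde X_m \in \F_q^{nm}$ satisfying $f(Y(X_m)) \equiv 0 \pmod{p^{m'}}$. Substituting into $f$ produces the Teichmüller expansion \eqref{thmus}, in which each summand $a_{ij}\prod_{t,l} g_{\beta_{tl}l} \cdot p^{i+|\beta|}$ has a Teichmüller element as its coefficient on $\mathcal T_m$ (since $a_{ij}\in T_q$ and each $g_{kl}$ takes Teichmüller values on $\mathcal T_m$). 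Lemma \ref{keylem} then converts the mod $p^{m'}$ condition into a system $s_0 = s_1 = \cdots = s_{m'-1} = 0$ in $\F_q[x_{ij} : 0 \leq i \leq m-1,\, 1 \leq j \leq n]$. To bound $\deg(s_n)$, I combine Lemma \ref{smm} (which expresses $s_n$ as the value of a weighted homogeneous polynomial of weighted degree $p^n$, a summand sitting at $p$-adic level $e$ carrying weight $p^e$) with Lemma \ref{jjkk}: under $\mathcal B_m \overset{m'}{\sim} \mathcal T_m$, a coefficient $\prod_{t,l} g_{\beta_{tl}l}$ with $|\beta| \leq n \leq m'-1$ has ordinary degree at most $|\alpha| p^{h\lfloor|\beta|/h\rfloor} \leq \deg(f) \cdot p^{|\beta|} \leq \deg(f) \cdot p^{e}$. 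Feeding these bounds into the weighted-homogeneity constraint $\sum a_{e_j,j}\, p^{e_j} = p^n$ yields $\deg(s_n) \leq p^n \deg f$, and applying Theorem \ref{axkatzthm} to this system of $m'$ polynomials in $nm$ variables produces \eqref{sing4}.

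For the three sharper bounds under $m \geq m'$, the key observation is that when $n \leq m'-1$ every index $\beta_{tl}$ lies in $[0,m'-1] \subseteq [0,m-1]$, so $g_{\beta_{tl}l} = x_{\beta_{tl}l}$ is simply one of the original variables (no polynomial substitution by any $g_{ij}$ with $i \geq m$). Hence the system $\{s_n = 0\}_{n=0}^{m'-1}$ depends only on the first $nm'$ variables $x_{ij}$ with $i \in [0,m'-1]$, the remaining $n(m-m')$ variables are free in $\F_q$, and $|V| = q^{n(m-m')} |V'|$ where $V' = \{X' \in \mathcal T_{m'} : f(X') \equiv 0 \pmod{p^{m'}}\}$. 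Since $\mathcal T_{m'}$ is itself a box in the sense of Theorems \ref{axkatzthm} and \ref{mrkthm}, applying Theorem \ref{axkatzthm} when $m' = 1$ and Theorem \ref{mrkthm} with $s=1$ when $m'>1$ delivers the three displayed lower bounds for $\mathrm{ord}_q(|V'|)$, and adding $n(m-m')$ finishes the proof.
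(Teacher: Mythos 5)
Your proposal is correct and follows the paper's own proof essentially step for step: parametrize $\mathcal B_m$ by $\mathcal T_m$ via Lemma \ref{boxlem}, convert $f(Y)\equiv 0 \pmod{p^{m'}}$ into a system of $m'$ polynomial equations over $\F_q$ in $nm$ variables via Lemmas \ref{keylem} and \ref{smm}, obtain $\deg h_k\leq p^k\deg f$ from Lemma \ref{jjkk} together with the weighted-homogeneity bound of Lemma \ref{homcor}, and apply the Ax--Katz theorem to get \eqref{sing4}. For the case $m\geq m'$ you and the paper both reduce to $|V|=q^{n(m-m')}|V'|$ and then invoke Theorems \ref{axkatzthm} and \ref{mrkthm}; your way of producing the factor $q^{n(m-m')}$ (noting that the defining equations only involve the variables $x_{ij}$ with $i<m'$) is a clean restatement of the paper's ``replace the trailing Teichm\"uller digits $B_j$ arbitrarily'' argument.
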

\begin{proof}
Let $f(X)=\sum_{j=1}^{r}a_jX^{u_j}\in\Z_q[x_1,\dots, x_n]$ and assume that $f$ has the Teichm\"uller expansion as in \eqref{thmus}. For each $i\in \N_0, j\in[1,r]$ and $\beta=(\beta_{tl})\in \N_0^{|u_j|}$ with $i+|\beta|<m'$, the term $a_{ij}\prod_{l=1}^{n}\prod_{t=1}^{\alpha_l}g_{_{{\beta_{tl}}l}}p^{i+|\beta|}$ corresponds to the $(i+|\beta|)$-th component in $W_{m'}(\F_q)$.
Given an element $Y\in \mathcal B_m$, by Lemma \ref{boxlem} we have $Y=\sum_{i=0}^{\infty}(g_{i1}(X_m),\dots,g_{in}(X_m))p^i$ with $X_m\in \mathcal T_m$ and $g_{ij}$ being the reduced Teichm\"uller polynomials in $nm$ variables. Then by Lemma \ref{keylem} we have $f(Y)\equiv 0\mod {p^{m'}}$ if and only if for all $k\in [0,m'-1]$,
$$h_k(\widetilde{X}_m):=s_k^{(r)}\bigg(\Big(\widetilde{a}_{ij}\prod_{l=1}^{n}\prod_{t=1}^{\alpha_l}
\widetilde{g}_{_{{\beta_{tl}}l}}(\widetilde{X}_m)\Big)^{p^{i+|\beta|}}\; \Big | \; j\in [1,r], \beta=(\beta_{tl})\in \N_0^{|u_j|}, i+|\beta|\leq k \bigg)=0,$$
where $\widetilde{a}_{ij}\in \F_q, \widetilde{X}_m\in \F_q^{mn}$ and $\widetilde{g}_{_{{\beta_{tl}}l}}$ are the polynomials over $\F_q$ in $nm$ variables. Define
  \begin{equation*}
  \widetilde{V}:=\left\{X_m\in \F_q^{nm} \mid  \widetilde{h}_k(X_m)= 0 \mbox{ for all } k\in [0,m'-1]\right\}.
  \end{equation*}
Then $|V|=|\widetilde{V}|$. By assumption $\mathcal B_m \overset{m'}{\sim} \mathcal T_m$, i.e., $\deg(g_{ij})\leq p^{h\lfloor\frac{i}{h}\rfloor}$ for all $i\in[0,m'-1],j\in[1,n]$. Note $q=p^h$ and hence $\alpha^{p^c}=\alpha^{p^{c-h\lfloor\frac{c}{h}\rfloor}}$ in $\F_q$ for any $\alpha\in \F_q$ and $c\in \N_0$. Taking arbitrarily a term in $\widetilde{h}_k$, say $\Big(\widetilde{a}_{ij}\prod_{l=1}^{n}\prod_{t=1}^{\alpha_l}\widetilde{g}_{_{{\beta_{tl}}l}}(\widetilde{X}_m)\Big)^{p^{i+|\beta|}}$, we have
\begin{equation*}
\Big(\widetilde{a}_{ij}\prod_{l=1}^{n}\prod_{t=1}^{\alpha_l}
\widetilde{g}_{_{{\beta_{tl}}l}}\Big)^{p^{i+|\beta|}}=\Big(\widetilde{a}_{ij}\prod_{l=1}^{n}\prod_{t=1}^{\alpha_l}
\widetilde{g}_{_{{\beta_{tl}}l}}\Big)^{p^{i+|\beta|-h\lfloor \frac{|\beta|}{h}\rfloor}}.
\end{equation*}
It follows from Lemma \ref{jjkk} that
\begin{equation}\label{sunak}
\deg\Big(\prod_{l=1}^{n}\prod_{t=1}^{\alpha_l}
g_{_{{\beta_{tl}}l}}\Big)p^{(i+|\beta|-h\lfloor \frac{|\beta|}{h}\rfloor)}\leq |u_j|p^{i+|\beta|}\leq p^{i+|\beta|} \deg f.
\end{equation}
Thus by Lemma \ref{homcor}, $\deg\widetilde{h}_k\leq p^k\deg f$ for $k\in [0,m'-1]$. Consequently,
$$\sum_{i=0}^{m'-1}\deg\widetilde{h}_k\leq\sum_{i=0}^{m'-1}p^k\deg f= \frac{p^{m'}-1}{p-1}\deg f.$$
Note $|V|=|\widetilde{V}|$. Applying the Ax-Katz theorem to $\widetilde{V}$ yields
 \begin{equation*}
  \mathrm{ord}_q(|V|)=\mathrm{ord}_q(|\widetilde{V}|)\geq \left\lceil\frac{nm-\sum_{k=0}^{m'-1}\deg\widetilde{h}_k}{\max\nolimits_{k\in [0,m'-1]}\deg\widetilde{h}_k}\right\rceil^*\geq \left\lceil\frac{nm-\frac{p^{m'}-1}{p-1}\deg f}{p^{m'-1}\deg f}\right\rceil^*.
  \end{equation*}
This completes the proof of \eqref{sing4}.

Now assume $m>m'\geq 1$. Define $\mathcal B_{m'}$ to be a subset of $\Z_q^n$ with $q^{nm'}$ elements such that $\mathcal B_{m'}$ modulo $p^{m'}$ is equal to $(\Z_q/p^{m'} \Z_q)^n$ and $V'=\{X\in \mathcal B_{m'}: f(X)\equiv 0\mod {p^{m'}}\}$. By Theorems \ref{axkatzthm} and \ref{mrkthm}, we have 
\begin{numcases}{\mathrm{ord}_q(|V'|)\geq}
 \left\lceil n/\deg f-1\right\rceil^* & if   $m'=1$,\nonumber\\
   \left\lfloor(nm'-1)/2\right\rfloor &  if $m'>1,n>1 \mbox{ and }\deg f> 1$,\label{sing2a}\\
  (n-1)m' & \mbox{otherwise.}\nonumber
\end{numcases}
Suppose $V'\neq \emptyset$. Given an arbitrary  $X=\sum_{i=0}^{m'-1}A_ip^i+\sum_{j=m'}^{m-1}B_jp^j+\sum_{k=m}^{\infty}C_kp^k\in V'$ with $A_i, B_j, C_k\in T_q^n$, if we replace $B_j(j\in[m',m-1])$ by any element $B_j'$ in $T_q^n$, then it is trivial to see that $X'=\sum_{i=0}^{m'-1}A_ip^i+\sum_{j=m'}^{m-1}B_j'p^j+\sum_{k=m}^{\infty}C_kp^k\in V$. Thus $\mathrm{ord}_q(|V|)=\mathrm{ord}_q(|V'|)+n(m-m')$, and \eqref{sing2} follows immediately by \eqref{sing2a}. Note that \eqref{sing2} also holds for the case of $V'=\emptyset$.
\end{proof}

Theorem \ref{thmforsing} for the general estimate \eqref{sing4} can be improved provided that we give a tighter upper bound in \eqref{sunak}.
\begin{thm}\label{thmforsing2}
Let $p$ be a prime number and $q = p^h$ with $h\in\mathbb{N}$. Let $m,m'\in \N$. Let $\mathcal B_m=\mathcal T_m[g_{ij}:i\in \N_0,j\in [1,n]]$. Let $f\in \mathbb{Z}_q[x_1,\ldots,x_n]$ be a nonconstant polynomial. Let $V=\{X\in \mathcal B_m: f(X)\equiv 0\mod {p^{m'}}\}$. Write the Teichm\"uller expansion $f=\sum_{i=0}^{\infty} p^i f_i$ with $f_i=\sum_{j=1}^{r}a_{ij}X^{u_j}$. Let $d\in \N$ with $d\leq \deg f$. If for each term $a_{ij}X^{u_j}$, we have
\begin{equation}\label{degreesum}
\deg\left(a_{ij}\prod_{l=1}^{n}\prod_{t=1}^{\alpha_l}g_{_{{\beta_{tl}}l}}(X)\right)\leq dp^{h\lfloor\frac{i+|\beta|}{h}\rfloor}
\end{equation}
for all $i\in [0,m'-1]$, $j\in [1,r]$,  $\beta=(\beta_{tl})\in \N_0^{|u_j|}$ with the sum $i+|\beta|\leq m'-1$, then
\begin{equation*}
\mathrm{ord}_q(|V|)\geq \left\lceil\frac{nm-\frac{p^{m'}-1}{p-1}d}{p^{m'-1}d}\right\rceil^*.
\end{equation*}
\end{thm}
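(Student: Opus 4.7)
The plan is to adapt the proof of Theorem \ref{thmforsing} almost verbatim, replacing the general degree estimate (derived from Lemma \ref{jjkk}) with the sharper termwise bound \eqref{degreesum}. First, for an arbitrary $Y \in \mathcal B_m$ I would invoke Lemma \ref{boxlem} to write $Y = \sum_{i \geq 0}(g_{i1}(X_m), \dots, g_{in}(X_m))p^i$, expand $f(Y)$ via \eqref{thmus}, and then apply Lemma \ref{keylem} to reformulate the condition $f(Y) \equiv 0 \pmod{p^{m'}}$ as a system $\widetilde h_k(\widetilde X_m) = 0$ for $k \in [0, m'-1]$, where each $\widetilde h_k$ arises from applying $s_k^{(r)}$ to the $\F_q$-reductions of the inputs $\bigl(a_{ij}\prod_{l,t} g_{\beta_{tl}l}\bigr)^{p^{i+|\beta|}}$ over all $(i,j,\beta)$ with $i + |\beta| \leq k$. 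At this point $|V| = |\widetilde V|$ for the associated variety $\widetilde V \subseteq \F_q^{nm}$ cut out by the $\widetilde h_k$.

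The crux of the argument lies in the choice of Frobenius reduction for each such input. Since $\alpha^{p^c} = \alpha^{p^{c - h\lfloor c/h \rfloor}}$ holds in $\F_q$, I would apply this identity with $c = i + |\beta|$, now taking the floor of $(i+|\beta|)/h$ rather than of $|\beta|/h$ (the latter being the choice made in the proof of Theorem \ref{thmforsing}). Combined with hypothesis \eqref{degreesum}, this matching yields the termwise bound
\[
\deg\bigl(\widetilde a_{ij}\textstyle\prod_{l,t}\widetilde g_{\beta_{tl}l}\bigr)^{p^{(i+|\beta|)-h\lfloor(i+|\beta|)/h\rfloor}} \leq d\, p^{h\lfloor(i+|\beta|)/h\rfloor}\cdot p^{(i+|\beta|)-h\lfloor(i+|\beta|)/h\rfloor} = d\, p^{i+|\beta|}.
\]
Hence every input feeding into $s_k^{(r)}$ at ``level'' $l = i+|\beta|$ has degree at most $d\, p^l$, and Lemma \ref{homcor}(i), applied with uniform weight parameter $d_j = d$ (so that $d'=d$), produces $\deg \widetilde h_k \leq d\, p^k$ for each $k \in [0, m'-1]$.

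From here the conclusion is routine: summing gives $\sum_{k=0}^{m'-1}\deg \widetilde h_k \leq d\,(p^{m'}-1)/(p-1)$ while $\max_k \deg \widetilde h_k \leq d\, p^{m'-1}$, so an application of the Ax-Katz theorem (Theorem \ref{axkatzthm}) to $\widetilde V$ produces the claimed lower bound. I do not anticipate any serious obstacle; the one delicate point is aligning the Frobenius exponent with the exact form of the hypothesis \eqref{degreesum}, so that the per-term degree bound aggregates through Lemma \ref{homcor} to the clean value $d\, p^k$ rather than the looser $(\deg f)\, p^k$ of the original argument.
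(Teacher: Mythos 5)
Your proposal is correct and follows exactly the paper's argument: the setup via Lemmas \ref{boxlem} and \ref{keylem} reducing to a variety $\widetilde V$ over $\F_q^{nm}$, the re-alignment of the Frobenius exponent to subtract $h\lfloor(i+|\beta|)/h\rfloor$ rather than $h\lfloor|\beta|/h\rfloor$ so it pairs cleanly with hypothesis \eqref{degreesum}, the resulting per-term bound $dp^{i+|\beta|}$, and the propagation through Lemma \ref{homcor} to $\deg\widetilde h_k\le dp^k$ before invoking Ax--Katz. You have correctly identified and handled the one delicate point — the Frobenius exponent choice — which is precisely the only change the paper makes relative to the proof of Theorem \ref{thmforsing}.
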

\begin{proof}
Note that in $\F_q$ with $q=p^h$, we have
\begin{equation*}
\Big(\widetilde{a}_{ij}\prod_{l=1}^{n}\prod_{t=1}^{\alpha_l}
\widetilde{g}_{_{{\beta_{tl}}l}}\Big)^{p^{i+|\beta|}}=\Big(\widetilde{a}_{ij}\prod_{l=1}^{n}\prod_{t=1}^{\alpha_l}
\widetilde{g}_{_{{\beta_{tl}}l}}\Big)^{p^{i+|\beta|-h\lfloor \frac{i+|\beta|}{h}\rfloor}}.
\end{equation*}
By the assumption \eqref{degreesum}, one sees that \eqref{sunak} becomes
\begin{equation*}
\deg\Big(\prod_{l=1}^{n}\prod_{t=1}^{\alpha_l}
g_{_{{\beta_{tl}}l}}\Big)p^{(i+|\beta|-h\lfloor \frac{i+|\beta|}{h}\rfloor)}\leq dp^{i+|\beta|}.
\end{equation*}
The other is similar to the proof of Theorem \ref{thmforsing}.
\end{proof}

\subsection{For a polynomial system}
Now we consider the polynomial system case. As mentioned above, it can be extended from Theorem \ref{thmforsing} without much more difficulties except for more cumbersome notation. So we only emphasize the difference between them.
\begin{thm}\label{thmforsystem}
Let $p$ be a prime number and $q = p^h$ with $h\in\mathbb{N}$. Let $m,m_1,\dots,m_s\in \N$ with $m_s\geq \dots \geq m_1$. Let $\mathcal B_m=\mathcal T_m[g_{ij}:i\in \N_0,j\in [1,n]]$. Let $f_1,\dots,f_s\in \mathbb{Z}_q[x_1,\ldots,x_n]$ be a system of nonconstant polynomials. Let $V=\{X\in \mathcal B_m: f_k(X)\equiv 0\mod {p^{m_k}}\mbox{ for all } k\in [1,s]\}$. Assume $\mathcal B_m \overset{m_s}{\sim} \mathcal T_m$. In general, we have
\begin{equation}\label{gg07}
  \mathrm{ord}_q(|V|)\geq\left\lceil\frac{nm-\sum_{k=1}^s\frac{p^{m_k}-1}{p-1}\deg f_k}{\max\nolimits_{k \in [1,s]}\{p^{m_k-1}\deg f_k\}}\right\rceil^*.
\end{equation}
In particular, further assume that $m\geq m_s=\dots=m_1$, then we have
\begin{numcases}{\mathrm{ord}_q(|V|)\geq}
 \left\lceil\frac{n-\sum_{k=1}^s\deg f_k}{\max\nolimits_{k \in [1,s]}\deg f_k}\right\rceil^*+n(m-m_1) & if   $m_1=1$,\nonumber \\
  \left\lfloor\frac{(n-s+1)m_1-1}{2}\right\rfloor+n(m-m_1) &  if $m_1>1, n>s$, \text{ and any } $\deg f_k>1$,\label{systempoly2} \\
     \left\lceil(n-s)m_1\right\rceil^*+n(m-m_1) & \mbox{otherwise.} \nonumber
\end{numcases}
\end{thm}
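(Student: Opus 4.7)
The plan is to mimic the proof of Theorem \ref{thmforsing} with minor bookkeeping adjustments. Fix $Y\in \mathcal B_m$ with Teichm\"uller expansion $Y=\sum_{i=0}^{\infty}(g_{i1}(X_m),\dots,g_{in}(X_m))p^i$ coming from Lemma \ref{boxlem}, and for each $k\in[1,s]$ write the Teichm\"uller expansion of $f_k(Y)$ as in \eqref{thmus}. Applying Lemma \ref{keylem} to each $f_k$ separately, the condition $f_k(Y)\equiv 0\mod p^{m_k}$ translates into $m_k$ polynomial equations $\widetilde{h}_{k,0}=\cdots=\widetilde{h}_{k,m_k-1}=0$ over $\F_q$ in the $nm$ variables comprising $\widetilde X_m$. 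Thus, setting $\widetilde V:=\{X_m\in\F_q^{nm}: \widetilde h_{k,j}(X_m)=0 \text{ for all }k\in[1,s],\, j\in[0,m_k-1]\}$, we get $|V|=|\widetilde V|$.

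Next I would bound the degrees of the $\widetilde h_{k,j}$. The hypothesis $\mathcal B_m\overset{m_s}{\sim}\mathcal T_m$ gives $\deg(g_{ij})\leq p^{h\lfloor i/h\rfloor}$ for $i\in[0,m_s-1]$, and since $m_k\leq m_s$ this hypothesis is available for every $k$. Exactly as in the estimate \eqref{sunak}, Lemma \ref{jjkk} and Lemma \ref{homcor} combine to yield $\deg\widetilde h_{k,j}\leq p^j\deg f_k$ for all $j\in[0,m_k-1]$. Summing and taking maxima gives
\begin{equation*}
\sum_{k=1}^s\sum_{j=0}^{m_k-1}\deg\widetilde h_{k,j}\;\leq\;\sum_{k=1}^s\frac{p^{m_k}-1}{p-1}\deg f_k,\qquad \max_{k,j}\deg\widetilde h_{k,j}\;\leq\;\max_{k\in[1,s]}\{p^{m_k-1}\deg f_k\}.
\end{equation*}
Applying the Ax-Katz theorem (Theorem \ref{axkatzthm}) to $\widetilde V\subseteq \F_q^{nm}$ immediately produces the general bound \eqref{gg07}.

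For the special case $m\geq m_s=\cdots=m_1$, I would follow the reduction used at the end of the proof of Theorem \ref{thmforsing}. Choose any box $\mathcal B_{m_1}\subseteq \Z_q^n$ of size $q^{nm_1}$ with $\mathcal B_{m_1}\bmod p^{m_1}=(\Z_q/p^{m_1}\Z_q)^n$, and let $V'=\{X\in\mathcal B_{m_1}: f_k(X)\equiv 0\mod p^{m_1}\text{ for all }k\in[1,s]\}$. Since $m_1=\cdots=m_s$, Theorem \ref{axkatzthm} (case $m_1=1$) and Theorem \ref{mrkthm} (case $m_1>1$) give the three alternative lower bounds on $\mathrm{ord}_q(|V'|)$ that appear inside \eqref{systempoly2} without the $n(m-m_1)$ term. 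To recover the $n(m-m_1)$ summand, I would argue exactly as in the single-polynomial case: every element $X\in V$ can be uniquely written $X=\sum_{i=0}^{m_1-1}A_ip^i+\sum_{j=m_1}^{m-1}B_jp^j+\sum_{k=m}^{\infty}C_kp^k$ where the "low part" $\sum_{i<m_1}A_ip^i$ lives in $V'$ (after identifying its class with an element of $\mathcal B_{m_1}$), the "tail" $\sum_{k\geq m}C_kp^k$ is determined by $X_m$ via the Teichm\"uller polynomials $g_{ij}$, and the "middle digits" $B_{m_1},\dots,B_{m-1}\in T_q^n$ are completely unconstrained because the congruences $f_k(X)\equiv 0\mod p^{m_k}$ only see digits below $p^{m_1}$. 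This yields $\mathrm{ord}_q(|V|)=\mathrm{ord}_q(|V'|)+n(m-m_1)$, which combined with the three cases from Theorems \ref{axkatzthm} and \ref{mrkthm} gives \eqref{systempoly2}.

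The substantive steps — the Teichm\"uller reduction, the degree estimate via Lemmas \ref{homcor} and \ref{jjkk}, and the application of Ax-Katz — are direct transcriptions of what was done for a single polynomial, so no new difficulty arises there. The main point requiring a clean argument is the "free middle digits" observation in the reduction to the equal-$m_k$ case, which must be stated carefully because the box $\mathcal B_m$ is not assumed split: one has to verify that freely changing the Teichm\"uller digits $B_j$ with $j\in[m_1,m-1]$ produces $q^{n(m-m_1)}$ genuinely distinct elements of $V$, which follows from the bijective correspondence between $X_m\in \mathcal T_m$ and $Y\in\mathcal B_m$ provided by Lemma \ref{boxlem}.
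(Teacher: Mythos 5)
Your proof is correct and follows essentially the same route as the paper's: reduce to a polynomial system over $\F_q$ in $nm$ variables via the Teichm\"uller expansion and Lemma \ref{keylem}, bound the degrees of the resulting $\widetilde h_{k,j}$ by $p^j\deg f_k$ using Lemmas \ref{homcor} and \ref{jjkk} together with the hypothesis $\mathcal B_m\overset{m_s}{\sim}\mathcal T_m$ (which covers all $m_k\leq m_s$), apply Ax--Katz for \eqref{gg07}, and then perform the free-middle-digits reduction for \eqref{systempoly2}. Your closing caveat --- that one must check the $q^{n(m-m_1)}$ completions of each valid low part land in $\mathcal B_m$ and are pairwise distinct, which follows from Lemma \ref{boxlem}'s bijection $\mathcal T_m\leftrightarrow\mathcal B_m$ and the fact that the congruence $f_k(Y)\equiv 0\pmod{p^{m_1}}$ factors through $Y\bmod p^{m_1}$ --- is actually a cleaner formulation than the paper's, which decomposes an element of $V'$ rather than of $V$ and leaves the membership $X'\in\mathcal B_m$ implicit.
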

\begin{proof}
We first consider the general estimate. From the proof of Theorem \ref{thmforsing}, we see that for each modulus $p^{m_k}$, the polynomial $f_k$ contributes $m_k$ polynomials $h_{tk}$ over $\F_q$ ($t\in [0,{m_k}-1]$)  whose degree is bounded by $p^{t}\deg f_k$ and thus
  $\sum_{t=0}^{{m_k}-1}\deg h_{tk}\leq \frac{p^{m_k}-1}{p-1}\deg f_k$. Now given $s$ polynomials $f_k$ and $s$ moduli $p^{m_k}$, $k\in[1,s]$, we get $\sum_{k=1}^{s}m_k$ polynomials over $\F_q$ with the sum of degrees $\leq \sum_{k=1}^{s}\frac{p^{m_k}-1}{p-1}\deg f_k$ and maximal degree bounded by $\max\nolimits_{k \in [1,s]}\{p^{m_k-1}\deg f_k\}$. Applying the Ax-Katz theorem, we obtain the desired estimate \eqref{gg07}.
  
  Now assume $m\geq m_s=\dots=m_1$. Define $\mathcal B_{m_1}$ to be a subset of $\Z_q^n$ with $q^{nm_1}$ elements such that $\mathcal B_{m_1}$ modulo $p^{m_1}$ is equal to $(\Z_q/p^{m_1} \Z_q)^n$ and $V'=\{X\in \mathcal B_{m_1}: f(X)\equiv 0\mod {p^{m_1}}\}$. By Theorems \ref{axkatzthm} and \ref{mrkthm}, we have 
\begin{numcases}{\mathrm{ord}_q(|V|)\geq}
 \left\lceil\frac{n-\sum_{k=1}^s\deg f_k}{\max\nolimits_{k \in [1,s]}\deg f_k}\right\rceil^*  & if   $m_1=1$,\nonumber \\
  \left\lfloor\frac{(n-s+1)m_1-1}{2}\right\rfloor  &  if $m_1>1, n>s$, \text{ and any } $\deg f_k>1$,\label{systempolyww2} \\
     \left\lceil(n-s)m_1\right\rceil^*  & \mbox{otherwise.} \nonumber
\end{numcases}
Suppose $V'\neq \emptyset$. Given an arbitrary  $X=\sum_{i=0}^{m_1-1}A_ip^i+\sum_{j=m_1}^{m-1}B_jp^j+\sum_{k=m}^{\infty}C_kp^k\in V'$ with $A_i, B_j, C_k\in T_q^n$, if we replace $B_j(j\in[m_1,m-1])$ by any element $B_j'$ in $T_q^n$, then it is trivial to see that $X'=\sum_{i=0}^{m_1-1}A_ip^i+\sum_{j=m_1}^{m-1}B_j'p^j+\sum_{k=m}^{\infty}C_kp^k\in V$. Thus $\mathrm{ord}_q(|V|)=\mathrm{ord}_q(|V'|)+n(m-m_1)$, and \eqref{systempoly2} follows by \eqref{systempolyww2}. Note that \eqref{systempolyww2} also holds for $V'=\emptyset$.
\end{proof}

The following theorem improves Theorem \ref{thmforsystem} for the general estimate \eqref{gg07}. Its proof is omitted since it is similar to that of Theorem \ref{thmforsing2}.
\begin{thm}
Let $p$ be a prime number and $q = p^h$ with $h\in\mathbb{N}$. Let $m,m_1,\dots,m_s\in \N$. Let $\mathcal B_m=\mathcal T_m[g_{ij}:i\in \N_0,j\in [1,n]]$. Let $f_1,\dots,f_s\in \mathbb{Z}_q[x_1,\ldots,x_n]$ be a system of nonconstant polynomials. Let $V=\{X\in \mathcal B_m: f_k(X)\equiv 0\mod {p^{m_k}}\mbox{ for all } k\in [1,s]\}$. For each $k\in[1,s]$, write the $p$-adic Teichm\"uller expansion
$f_k=\sum_{i=0}^{\infty} p^if_{k,i}(X) $
with $f_{k,i}(X)=\sum_{j=1}^{r_k}a_{ij}^{(k)}X^{u_j^{(k)}}$. Let $d_1,\dots,d_s\in \N$. If for each term $a_{ij}^{(k)}X^{u_j^{(k)}}$, we have
\begin{equation*}
\deg\left(a_{ij}^{(k)}\prod_{l=1}^{n}\prod_{t=1}^{\alpha_l}g_{_{{\beta_{tl}}l}}(X)\right)\leq d_kp^{h\lfloor\frac{i+|\beta|}{h}\rfloor}
\end{equation*}
for all $i\in [0,m_k-1]$, $j\in [1,r_k]$,  $\beta=(\beta_{tj})\in \N_0^{|u_j^{(k)}|}$ with the sum $i+|\beta|\leq m_k-1$, then
\begin{equation*}
\mathrm{ord}_q(|V|)\geq \left\lceil\frac{nm-\sum_{k=1}^s\frac{p^{m_k}-1}{p-1}d_k}{\max\nolimits_{k \in [1,s]}\{p^{m_k-1}d_k\}}\right\rceil^*.
\end{equation*}
\end{thm}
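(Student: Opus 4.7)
\medskip

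The plan is to adapt the proof of Theorem \ref{thmforsing2} to the system setting in exactly the way Theorem \ref{thmforsystem} extended Theorem \ref{thmforsing}. The sharper hypothesis \eqref{degreesum}, with a tailor-made bound $d_k$ attached to each polynomial $f_k$, will replace the uniform bound $p^i\deg f_k$ used in the proof of Theorem \ref{thmforsystem}. The only new ingredient relative to Theorem \ref{thmforsystem} is the refined degree bookkeeping, and relative to Theorem \ref{thmforsing2} is the handling of $s$ polynomials simultaneously.

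First, fix $k\in[1,s]$. For $Y\in \mathcal{B}_m$, Lemma \ref{boxlem} writes $Y=\sum_{i\geq 0}(g_{i1}(X_m),\dots,g_{in}(X_m))p^i$ with $X_m\in \mathcal{T}_m$, and substituting into the Teichmüller expansion of $f_k$ produces, exactly as in \eqref{thmus}, the $p$-adic expansion of $f_k(Y)$ whose $t$-th coefficient can be packaged via Lemma \ref{smm} and Lemma \ref{keylem}. Concretely, for each $t\in[0,m_k-1]$ define
\begin{equation*}
h_{t,k}(\widetilde{X}_m):=s_t^{(r_k)}\!\left(\bigl(\widetilde{a}_{ij}^{(k)}\textstyle\prod_{l=1}^{n}\prod_{\tau=1}^{\alpha_l}\widetilde{g}_{\beta_{\tau l}l}(\widetilde{X}_m)\bigr)^{p^{i+|\beta|}}\,\Big|\,j\in[1,r_k],\ \beta\in\N_0^{|u_j^{(k)}|},\ i+|\beta|\leq t\right).
\end{equation*}
By Lemma \ref{keylem}, the congruence $f_k(Y)\equiv 0\pmod{p^{m_k}}$ is equivalent to $h_{t,k}(\widetilde{X}_m)=0$ for all $t\in[0,m_k-1]$. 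Letting
\begin{equation*}
\widetilde{V}:=\{\widetilde{X}_m\in \F_q^{nm}\mid h_{t,k}(\widetilde{X}_m)=0\ \text{for all}\ k\in[1,s],\ t\in[0,m_k-1]\},
\end{equation*}
the bijection $Y\leftrightarrow \widetilde{X}_m$ gives $|V|=|\widetilde{V}|$.

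Next, I would bound $\deg h_{t,k}$. Using the Frobenius identity $\alpha^{p^c}=\alpha^{p^{c-h\lfloor c/h\rfloor}}$ in $\F_q$, each atomic term occurring in $h_{t,k}$ satisfies
\begin{equation*}
\bigl(\widetilde{a}_{ij}^{(k)}\textstyle\prod_{l,\tau}\widetilde{g}_{\beta_{\tau l}l}\bigr)^{p^{i+|\beta|}}=\bigl(\widetilde{a}_{ij}^{(k)}\textstyle\prod_{l,\tau}\widetilde{g}_{\beta_{\tau l}l}\bigr)^{p^{i+|\beta|-h\lfloor(i+|\beta|)/h\rfloor}},
\end{equation*}
and the hypothesis \eqref{degreesum} bounds the degree of the base by $d_k\,p^{h\lfloor(i+|\beta|)/h\rfloor}$. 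Multiplying the two estimates collapses to $d_k p^{i+|\beta|}\leq d_k p^t$. Combining this with the homogeneity claim for $s_t^{(r_k)}$ in Lemma \ref{smm} yields $\deg h_{t,k}\leq d_k p^t$, and summing over $t\in[0,m_k-1]$ gives $\sum_{t=0}^{m_k-1}\deg h_{t,k}\leq\frac{p^{m_k}-1}{p-1}d_k$. Over all $k$, the total degree sum is $\sum_{k=1}^s\frac{p^{m_k}-1}{p-1}d_k$ and the maximum individual degree is $\max_{k}p^{m_k-1}d_k$.

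Finally, applying the Ax--Katz theorem (Theorem \ref{axkatzthm}) to the system defining $\widetilde V\subset \F_q^{nm}$ gives
\begin{equation*}
\mathrm{ord}_q(|V|)=\mathrm{ord}_q(|\widetilde V|)\geq \left\lceil\frac{nm-\sum_{k=1}^s\tfrac{p^{m_k}-1}{p-1}d_k}{\max_{k\in[1,s]}\{p^{m_k-1}d_k\}}\right\rceil^*,
\end{equation*}
which is the desired inequality. The one spot that requires genuine care, rather than being routine bookkeeping, is the degree collapse in the previous paragraph: one must verify that the hypothesis \eqref{degreesum} bounds the \emph{base} of the $p^{i+|\beta|}$-power before Frobenius folding, and that after folding the exponent aligns exactly with the factor $p^{h\lfloor(i+|\beta|)/h\rfloor}$ so as to yield the clean bound $d_k p^t$. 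Beyond this point the argument is a direct parallel of Theorem \ref{thmforsing2}, uniformly applied across $k\in[1,s]$.
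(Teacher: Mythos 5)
Your proposal is correct and follows precisely the route the paper intends: the paper omits the proof, stating only that it is similar to Theorem \ref{thmforsing2}, and your adaptation (system-wide bookkeeping as in Theorem \ref{thmforsystem}, with the refined per-polynomial degree bound $d_k p^{i+|\beta|}$ obtained via the hypothesis \eqref{degreesum} and Frobenius folding by $p^{h\lfloor (i+|\beta|)/h\rfloor}$, then Ax--Katz over $\F_q^{nm}$) is exactly that omitted argument. The only nit is attributive: the degree bound $\deg h_{t,k}\leq d_k p^t$ after substitution comes from the weighted-degree estimate in Lemma \ref{homcor}(i) rather than from the homogeneity statement in Lemma \ref{smm}, but the reasoning is unaffected.
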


\section{Examples and future research} \label{sect4}
We provide several examples for the single polynomial case, from which we see that our estimate given by \eqref{sing4} can be achieved and that there are still some research worth doing.

\begin{example}
  Let $p=2$ and $h=1$. Let $\mathcal B_2=\mathcal T_2[g_{ij}:i\in \N_0,j\in [1,4]]$ with  $g_{ij} = x_{ij}$ for $i\in[0,1], j\in[1,4]$ and
    $g_{21} = x_{01}x_{11}x_{02}x_{14}, g_{22}=g_{23}=g_{24}=0$ and $g_{ij}=0$ for all $i>2$. Then $\mathcal B_2 \overset{3}{\sim} \mathcal T_2$. Let $f(x_1,x_2,x_3,x_4)=x_1 + 3x_2 + 5x_3+ 6x_4$ be a polynomial over $\mathbb{Z}_2$. Let $V=\{(x_1,x_2,x_3,x_4)\in \mathcal B_2: f(x_1,x_2,x_3,x_4)\equiv 0\mod {2^{3}}\}$. By calculation, we get $|V|=30$ and hence $\ord_2(|V|)= 1$, which achieves the estimate given by \eqref{sing4}. 
\end{example}

As mentioned in Section \ref{sect1}, it is known that the estimate given by \eqref{sing4} is best possible for $m=1$. The example above shows that it is also best possible for $m\geq 2$. However, this estimate may be improved for the other cases, as shown in the example below.
\begin{example}
  Let $p=2$ and $h=1$. Let $\mathcal B_2=\mathcal T_2[g_{ij}:i\in \N_0,j\in [1,4]]$ with  $g_{ij} = x_{ij}$ for $i\in[0,1], j\in[1,4]$ and
    $g_{21} = x_{01}x_{11}x_{02}x_{14}, g_{22} = x_{01}x_{12}x_{03}x_{04}, g_{23}=g_{24}=0$ and $g_{ij}=0$ for all $i>2$. Then $\mathcal B_2 \overset{3}{\sim} \mathcal T_2$. Let $f(x_1,x_2,x_3,x_4)=x_1 + 2x_2 + 2x_3+ 4x_4$ be a polynomial over $\mathbb{Z}_2$. Let $V=\{(x_1,x_2,x_3,x_4)\in \mathcal B_2: f(x_1,x_2,x_3,x_4)\equiv 0\mod {2^{3}}\}$. By calculation, we get $|V|=32$ and hence $\ord_2(|V|)= 5>1$, which is greater than the estimate given by \eqref{sing4}.
\end{example}

Finally, we present an example to show that though the condition that $\mathcal B_m \overset{m'}{\sim} \mathcal T_m$ is important in the proof of Theorems \ref{thmforsing} and \ref{thmforsystem}, it is not necessary.
\begin{example}
  Let $p=2$ and $h=1$. Let $\mathcal B_2=\mathcal T_2[g_{ij}:i\in \N_0,j\in [1,4]]$ with  $g_{ij} = x_{ij}$ for $i\in[0,1], j\in[1,4]$ and
    $g_{21} = x_{01}x_{11}x_{02}x_{12}x_{03}, g_{22} = x_{01}x_{12}x_{03}x_{04}, g_{23}=g_{24}=0$ and $g_{ij}=0$ for all $i>2$. Then the condition $\mathcal B_2 \overset{3}{\sim} \mathcal T_2$ does not hold. Let $f(x_1,x_2,x_3,x_4)=x_1+3x_2+4x_3+7x_4$ be a polynomial over $\mathbb{Z}_2$. Let $V=\{(x_1,x_2,x_3,x_4)\in \mathcal B_2: f(x_1,x_2,x_3,x_4)\equiv 0\mod {2^{3}}\}$. By calculation, we get $|V|=30$ and hence $\ord_2(|V|)\geq 1$.
\end{example}

\end{document}